\newtheorem{theorem}{Theorem}[section]
\newtheorem{definition}[theorem]{Definition}
\newtheorem{lemma}[theorem]{Lemma}
\newtheorem{proposition}[theorem]{Proposition}
\newlength\cellsize \setlength\cellsize{13\unitlength}
\newcommand\cellify[1]{\def\thearg{#1}\def\nothing{}%
\ifx\thearg\nothing\vrule width0pt height\cellsize depth0pt%
  \else\hbox to 0pt{\usebox2\hss}\fi%
  \vbox to 13\unitlength{\vss\hbox to 13\unitlength{\hss$#1$\hss}\vss}}
\newcommand\tableau[1]{\vtop{\let\\=\cr
\setlength\baselineskip{-12000pt}
\setlength\lineskiplimit{12000pt}
\setlength\lineskip{0pt}
\halign{&\cellify{##}\cr#1\crcr}}}
\newcommand{\graybox}{\textcolor[RGB]{165,165,165}{\rule{1\cellsize}{1\cellsize}}\hspace{-\cellsize}\usebox2}
\newcommand{\grayboxtwo}{\textcolor[RGB]{220,220,220}{\rule{1\cellsize}{1\cellsize}}\hspace{-\cellsize}\usebox2}
\newcommand{\whitebox}{\textcolor[RGB]{255,255,255}{\rule{1\cellsize}{1\cellsize}}\hspace{-1.2\cellsize}\usebox3}
\title[quasi-Yamanouchi tableaux]{Enumerating quasi-Yamanouchi tableaux\\ of Durfee size two}  
\author[George Wang]{George Wang}
\address{Department of Mathematics, University of Pennsylvania, Philadelphia, PA, 19104}
\email{wage@math.upenn.edu}
\begin{document}

\subjclass{Primary 05A15; Secondary 05E05}

\date{\today}

\keywords{Quasi-Yamanouchi tableaux, Young tableaux, Schur polynomials}


\begin{abstract}
Quasi-Yamanouchi tableaux connect the two most studied types of tableaux. They are a subset of semistandard Young tableaux that are also a refinement on standard Young tableaux, and they can be used to improve the fundamental quasisymmetric expansion of Schur polynomials. We prove a product formula for enumerating certain quasi-Yamanouchi tableaux and provide strong evidence that no product formula exists in general for other shapes. Along the way, we also prove some nice properties of their distribution and symmetry. 
\end{abstract}

\maketitle

\section{Introduction}

Schur polynomials can be expressed elegantly as a sum of monomials indexed by semistandard Young tableaux. In 1984, Gessel \cite{Ge84} defined the fundamental basis for quasisymmetric functions and proved that Schur polynomials can be expressed using fundamental quasisymmetric polynomials indexed by standard Young tableaux instead. An advantage to this is that the number of standard Young tableaux of a given shape is fixed, while the number of semistandard Young tableaux depends on which integers are allowed. In some cases however, this expansion includes standard Young tableaux which contribute nothing to the polynomial, and a question arises as to whether this can be improved further. Assaf and Searles \cite{AS16} introduced the concept of quasi-Yamanouchi tableaux and proved that Gessel's expansion can be tightened by summing over these tableaux instead, which give exactly the nonzero terms. 

A natural question that arises when dealing with any kind of tableaux is how to count them. For example, Frame, Robinson, and Thrall produced the celebrated hook-length formula that counts standard Young tableaux of a given shape 
[FRT54], and Stanley enumerated semistandard Young tableaux with the hook-content formula [Sta71]. Both of these important results are product formulas over the cells of the diagram. Counting formulas and their various proofs often offer deep insights to the objects in question, and even the nonexistence of certain types of formulas is valuable knowledge.
In this paper, we prove a product formula for the enumeration of quasi-Yamanouchi tableaux with Durfee size two and show that it is unlikely that a product formula exists for tableaux with Durfee size greater than two due to the presence of large primes. We also prove several interesting properties of quasi-Yamanouchi tableaux along the way, demonstrating the symmetry in their distribution.

\section{Preliminaries}

A \textit{partition} $\lambda = (\lambda_1 \geq \lambda_2 \geq \cdots \geq \lambda_k > 0)$ is a decreasing sequence of positive integers. The \textit{size} of $\lambda$ is the sum of the integers of the sequence. The length of $\lambda$, denoted $\ell(\lambda)$, is the number of integers in the partition. We will use French notation to draw the \textit{diagram} corresponding to a partition by having the number of boxes in the $i$th row equal to $\lambda_i$ and left justifying rows. A box can be identified as $u = (i,j)$, where $i$ is the column and $j$ is the row. The \textit{content} of a square is $c(u) = i-j$. The hook-length of a box is $h(u)$, the number of boxes $(a,b)$ such that either $a > i$ or $b > j$, or $a=i, b=j$. 
\begin{figure}[t]
	\begin{displaymath}
	\tableau{
	\ \\
	\ & \ \\
	\ & \ & \ & \ \\
	}
	\ \ \ \ \ \ 
	\tableau{
	1\\
	2&1\\
	6&4&2&1\\
	}
	\ \ \ \ \ \ 
	\tableau{
	-2 \\ 
	-1 & 0 \\
	0 & 1 & 2 & 3\\
	}
	\end{displaymath}
	\caption{\label{fig:421} From left to right: the diagram, hook-lengths, and content of $(4, 2, 1)$.}
\end{figure}
The \textit{Durfee size} of a partition is the side length of the largest square that will fit in the diagram.

\begin{figure}[ht]
	\begin{displaymath}
	\tableau{
	\ & \ \\
	\graybox & \graybox &\graybox \\
	\graybox & \graybox &\graybox  & \ & \ \\
	\graybox & \graybox &\graybox  & \ & \ & \ \\
	}
	\end{displaymath}
	\caption{\label{fig:durfee} The Durfee square of $(6, 5, 3, 2)$, which has Durfee size $3$.}
\end{figure}

A \textit{semistandard Young tableau} is a filling of a partition $\lambda$ using positive integers that weakly increase to the right and strictly increase upwards. If there are no restrictions on the positive integers that can be used, then there is an infinite number of possible fillings, so it is more useful to enumerate SSYT$_m(\lambda)$, the number of semistandard fillings of $\lambda$ where the maximum allowed integer is $m$. Stanley's hook-content formula gives this enumeration.
\begin{theorem}[{Hook-content formula [Sta71]}]
Given a partition $\lambda$, the number of semistandard fillings of $\lambda$ where the maximum allowed integer is $m$ is

$$\textup{SSYT}_n(\lambda) = \prod_{u\in \lambda} \frac{m+c(u)}{h(u)}$$
\end{theorem}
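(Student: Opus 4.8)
The plan is to identify $\mathrm{SSYT}_m(\lambda)$ with a specialization of a Schur polynomial and to evaluate that specialization via the bialternant (Weyl character) formula. Since the Schur polynomial in $m$ variables is the tableau generating function $s_\lambda(x_1,\dots,x_m)=\sum_T\prod_{i=1}^m x_i^{\,m_i(T)}$, where $T$ runs over the semistandard fillings of $\lambda$ with entries in $\{1,\dots,m\}$ and $m_i(T)$ counts the $i$'s in $T$, setting every $x_i=1$ gives $s_\lambda(1^m)=\mathrm{SSYT}_m(\lambda)$. If $\ell(\lambda)>m$ there are no such fillings, and correspondingly the cell $u=(1,m+1)\in\lambda$ has $c(u)=-m$, so the factor $m+c(u)$ vanishes and both sides are $0$; hence we may assume $\ell(\lambda)\le m$ and regard $\lambda$ as having exactly $m$ (possibly zero) parts. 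It remains to prove $s_\lambda(1^m)=\prod_{u\in\lambda}(m+c(u))/h(u)$.

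Rather than specializing directly, I would first compute the $q$-deformation $s_\lambda(1,q,q^2,\dots,q^{m-1})$, which is more rigid and returns $s_\lambda(1^m)$ as $q\to1$. Using the bialternant expression $s_\lambda=a_{\lambda+\delta}/a_\delta$ (a standard alternative form of $s_\lambda$), with $\delta=(m-1,m-2,\dots,1,0)$ and $a_\mu=\det\!\big(x_i^{\mu_j}\big)_{1\le i,j\le m}$, the substitution $x_i=q^{i-1}$ turns $a_\mu$ into the Vandermonde determinant $\det\!\big((q^{\mu_j})^{i-1}\big)=\prod_{1\le i<j\le m}(q^{\mu_j}-q^{\mu_i})$. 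Applying this with $\mu=\lambda+\delta$ and with $\mu=\delta$, factoring $q^{\mu_j}$ out of each difference $q^{\mu_j}-q^{\mu_i}$ (valid since $\mu_j<\mu_i$ for $i<j$ in both cases), and cancelling the resulting common monomials, one obtains
$$
s_\lambda(1,q,\dots,q^{m-1})=q^{\,n(\lambda)}\prod_{1\le i<j\le m}\frac{1-q^{\,(\lambda_i-\lambda_j)+(j-i)}}{1-q^{\,j-i}},\qquad n(\lambda)=\sum_i(i-1)\lambda_i.
$$

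The crux, and the step I expect to be the real obstacle, is the combinatorial identity that converts this product over pairs into a product over cells,
$$
\prod_{1\le i<j\le m}\frac{1-q^{\,(\lambda_i-\lambda_j)+(j-i)}}{1-q^{\,j-i}}=\prod_{u\in\lambda}\frac{1-q^{\,m+c(u)}}{1-q^{\,h(u)}}.
$$
I would prove it with $\beta$-numbers: set $\mu_i=\lambda_i+m-i$, so that $(\lambda_i-\lambda_j)+(j-i)=\mu_i-\mu_j$ and $\mu_1>\cdots>\mu_m\ge0$. The key lemma is that for each $i$ the multiset $\{h(u):u\text{ in the }i\text{th row of }\lambda\}$, together with $\{\mu_i-\mu_j:i<j\le m\}$, is exactly $\{1,2,\dots,\mu_i\}$; this is a standard fact relating the hook lengths of $\lambda$ to its $\beta$-set, most cleanly seen from the boundary lattice path of $\lambda$. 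Replacing each integer $a$ occurring there by $1-q^a$, taking the product over all $i$, and dividing by the $\lambda=\varnothing$ instance of the same relation (namely $\prod_i\prod_{d=1}^{m-i}(1-q^d)=\prod_{i<j}(1-q^{j-i})$), one finds that the left side of the identity equals $\big(\prod_i\prod_{d=m-i+1}^{\mu_i}(1-q^d)\big)\big/\big(\prod_{u\in\lambda}(1-q^{h(u)})\big)$; since in the $i$th row the exponents $m-i+k$ for $k=1,\dots,\lambda_i$ are precisely the values $m+c(k,i)$ as $(k,i)$ runs over that row's cells, the numerator is $\prod_{u\in\lambda}(1-q^{m+c(u)})$, and the identity follows.

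Finally I would send $q\to1$: the prefactor $q^{n(\lambda)}$ tends to $1$, and each factor $(1-q^a)/(1-q^b)$ tends to $a/b$, so $s_\lambda(1^m)=\prod_{u\in\lambda}(m+c(u))/h(u)$, completing the proof. In this route the Vandermonde evaluation and the limit are routine, and essentially all the content is in the $\beta$-number lemma; it is worth noting that Stanley's original argument via $P$-partitions, and a Lindstr\"om--Gessel--Viennot evaluation of the Jacobi--Trudi determinant, both funnel through the same identity.
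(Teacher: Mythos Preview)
The paper does not actually prove this statement: it is quoted as a classical result with a citation to Stanley~[Sta71] and is used only as background input to later arguments (e.g., Lemma~\ref{minmaxm}). There is therefore no ``paper's own proof'' to compare against.

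That said, your argument is correct and is essentially the standard one. The identification $\mathrm{SSYT}_m(\lambda)=s_\lambda(1^m)$, the evaluation of the principal specialization $s_\lambda(1,q,\dots,q^{m-1})$ via the bialternant/Vandermonde, and the conversion of the pairwise product into the cellwise product using the $\beta$-number lemma (that the hook lengths in row $i$ are the complement of $\{\mu_i-\mu_j:j>i\}$ inside $\{1,\dots,\mu_i\}$) are all sound, and the $q\to1$ limit is routine. Your handling of the degenerate case $\ell(\lambda)>m$ is also fine. The only cosmetic point is that the paper's displayed statement has a typographical slip ($\mathrm{SSYT}_n$ where $\mathrm{SSYT}_m$ is meant); your proof correctly targets $\mathrm{SSYT}_m(\lambda)$.
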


\begin{figure}[ht]
	\begin{displaymath}
	\tableau{
	2 & 2 \\
	1 & 1 \\ }
	\ \ \ \ \ \tableau{
	2 & 3 \\ 
	1 & 1 \\}
	\ \ \ \ \ \tableau{
	2 & 3 \\
	1 & 2 \\}
	\ \ \ \ \ \tableau{
	3 & 3 \\
	1 & 1 \\}
	\ \ \ \ \ \tableau{
	3 & 3 \\
	1 & 2 \\}
	\ \ \ \ \ \tableau{
	3 & 3 \\
	2 & 2\\}
	\end{displaymath}
	\caption{\label{fig:ssyt22} The $6$ semistandard fillings of SSYT$_3(2,2)$.}
\end{figure}

Let $|\lambda| = n$. A \textit{standard Young tableau} is a semistandard Young tableau using each of $\{1, 2, \ldots, n\}$ exactly once. Frame, Robinson, and Thrall provided an enumeration for standard Young tableaux using the hook-length formula.

\begin{theorem}[{Hook-length formula [FRT54]}]
Given a partition $\lambda$ of size $n$, the number of standard fillings of $\lambda$ is

$$\textup{SYT}(\lambda) = \frac{n!}{\prod_{u\in \lambda} h(u)}$$

\end{theorem}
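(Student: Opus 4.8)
Set $n=|\lambda|$. The plan is to obtain the hook-length formula essentially for free from the hook-content formula already stated, by regarding $\textup{SSYT}_m(\lambda)$ as a polynomial in the bound $m$ and reading off its leading coefficient in two different ways. On one hand, the hook-content formula writes $\textup{SSYT}_m(\lambda)=\prod_{u\in\lambda}(m+c(u))\big/\prod_{u\in\lambda}h(u)$, which is manifestly a polynomial in $m$ of degree exactly $n$ whose leading coefficient is $1/\prod_{u\in\lambda}h(u)$. On the other hand, I would compute that same leading coefficient combinatorially and equate.

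The key observation for the combinatorial computation is that $\textup{SSYT}_m(\lambda)$ is the order polynomial of the ``cell poset'' of $\lambda$---the cell $(i,j)$ required to lie below $(i{+}1,j)$ and $(i,j{+}1)$, with the column relations made strict and the row relations weak, so that $(P,\omega)$-partitions with parts in $[m]$ are exactly the semistandard fillings with entries $\le m$---and that the linear extensions of this poset are precisely the standard Young tableaux of shape $\lambda$, since a linear extension labels the cells $1,\dots,n$ so as to increase rightward and upward. By the general theory of $(P,\omega)$-partitions (equivalently, by computing the normalized volume of the order polytope of the cell poset), the order polynomial has degree $n$ and leading coefficient $e(P)/n!$, where $e(P)$ is the number of linear extensions; here that is $f^\lambda/n!$. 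Comparing the two expressions for the leading coefficient yields $f^\lambda/n!=1/\prod_{u\in\lambda}h(u)$, i.e.\ $f^\lambda=n!/\prod_{u\in\lambda}h(u)$.

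The step that genuinely needs care---and that I would flag as the main obstacle---is this second computation of the leading coefficient, precisely because the naive version of it is circular: one cannot simply say ``the number of semistandard fillings with a repeated entry is of lower order in $m$,'' since that lower-order claim is itself equivalent to the theorem. The clean fix is to invoke the order-polynomial / $(P,\omega)$-partition machinery (or, geometrically, the identity $\mathrm{vol}\,\mathcal{O}(P)=e(P)/n!$ for the order polytope), which pins down the leading term independently of any hook formula; the fact that strict versus weak relations affect only lower-order terms is the relevant subtlety there. Once that input is granted, the degree count from the hook-content formula makes the rest immediate.

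If a fully self-contained argument were wanted instead, I would fall back on one of the classical proofs: the Greene--Nijenhuis--Wilf probabilistic ``hook walk,'' which verifies directly that $n!/\prod_u h(u)$ satisfies the defining recursion $f^\lambda=\sum_\mu f^\mu$ over the diagrams $\mu$ obtained by deleting an outer corner of $\lambda$; the Novelli--Pak--Stoyanovsky bijection between the $n!$ bijective fillings of $\lambda$ by $1,\dots,n$ and pairs consisting of a standard Young tableau and a ``hook function'' (assigning each cell $u$ a value in $\{0,\dots,h(u){-}1\}$, hence $\prod_u h(u)$ choices); or the original Frame--Robinson--Thrall induction. Given that the hook-content formula is already on the table in this paper, however, the leading-coefficient comparison above is by far the shortest route.
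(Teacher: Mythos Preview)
The paper does not actually prove this theorem: it is stated as a classical result with the citation [FRT54] and no proof is given, so there is nothing to compare your argument against on the paper's side.

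Your leading-coefficient argument is correct in outline and is a genuinely nice way to read the hook-length formula off the hook-content formula, provided one is willing to import the order-polynomial fact that the top coefficient of $\Omega_{(P,\omega)}(m)$ equals $e(P)/n!$. You are right to flag that as the only nontrivial step and to note the circularity trap in the naive version; once the $(P,\omega)$-partition input (or the order-polytope volume identity) is granted, the comparison with $\prod_u(m+c(u))/\prod_u h(u)$ is immediate. One further caveat worth mentioning: the hook-content formula itself is often proved via the hook-length formula, so in a self-contained development the dependency could run the wrong way; but since this paper simply cites both as known results, using one to recover the other is perfectly consistent with its framework. Your listed fallbacks (Greene--Nijenhuis--Wilf, Novelli--Pak--Stoyanovskii, or the original FRT induction) are all sound standalone alternatives.
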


\begin{figure}[ht]
	\begin{displaymath}
	\tableau{
	4&5\\
	1&2&3\\}
	\ \ \ \ \ \tableau{
	3&5\\
	1&2&4}
	\ \ \ \ \ \tableau{
	3&4\\
	1&2&5}
	\ \ \ \ \ \tableau{
	2&5\\
	1&3&4}
	\ \ \ \ \ \tableau{
	2&4\\
	1&3&5}
	\end{displaymath}
	\caption{\label{fig:syt221} The $5$ standard fillings of $(3,2)$.}
\end{figure}

The \textit{descent set} of a tableau is the set of entries $i \in$ Des$(T) \subseteq \{1, 2, \ldots, n-1\}$ such that $i+1$ is weakly left of $i$. If we write the descent set as $\{d_1, d_2, \ldots, d_{k-1}\}$ in increasing order, then the first $\textit{run}$ of the tableau is the set of boxes that contain all the entries from $1$ to $d_1$. For $ 1 < i < k$, the $i$th run is the set of boxes that contain all entries from $d_{i-1}+1$ to $d_i$. The $k$th run starts at $d_k+1$ and ends at $n$. 

\begin{figure}[H]
	\begin{displaymath}
	\tableau{
	\mathclap{\graybox}\mathclap{\raisebox{4\unitlength}{9}}&\mathclap{\graybox}\mathclap{\raisebox{4\unitlength}{10}}& 12\\
	4&5&7&\mathclap{\graybox}\mathclap{\raisebox{4\unitlength}{11}}\\
	\mathclap{\grayboxtwo}\mathclap{\raisebox{4\unitlength}{1}}&\mathclap{\grayboxtwo}\mathclap{\raisebox{4\unitlength}{2}}&\mathclap{\grayboxtwo}\mathclap{\raisebox{4\unitlength}{3}} & 6&8\\
	}
	\end{displaymath}
	\caption{\label{fig:runs} A standard Young tableau with five runs. The first and fourth runs are highlighted, and the tableau has descent set $\{3,6,8,11\}$.}
\end{figure}

 Finally, we can define what it means for a tableau to be quasi-Yamanouchi. 

\begin{definition}
\normalfont A semistandard Young tableau is \textit{quasi-Yamanouchi} if when $i$ appears in the tableau, the leftmost instance of $i$ is weakly left of some $i-1$. Let QYT$_{\leq m}(\lambda)$ denote the set of quasi-Yamanouchi fillings of $\lambda$ where the entries of the tableau are at most $m$. Let QYT$_{=m}(\lambda)$ denote the set of quasi-Yamanouchi fillings of $\lambda$ where $m$ is exactly the largest value that appears.
\end{definition}
\begin{figure}[h]
	\begin{displaymath}
	\tableau{
	4\\
	2&3\\
	1&2&2&4\\}
	\ \ \ \ \ \ 
	\tableau{
	4\\
	3&3\\
	1&2&2&5\\}
	\end{displaymath}
	\caption{\label{fig:notqyt421} On the left is an example of a quasi-Yamanouchi filling and on the right is a non-example.}
\end{figure}
\begin{figure}[h]
	\begin{displaymath}
	\tableau{
	3\\
	2&2\\
	1&1\\}
	\ \ \ \ \ \tableau{
	3\\
	2&3\\
	1&1\\}
	\ \ \ \ \ \tableau{
	3\\
	2&3\\
	1&2\\}
	\ \ \ \ \ \tableau{
	4\\
	2&3\\
	1&2\\}
	\ \ \ \ \ \tableau{
	3\\
	2&4\\
	1&3\\}
	\end{displaymath}
	\caption{\label{fig:qyt221} All the quasi-Yamanouchi tableaux of shape $(2,2,1)$, demonstrating that $Q_{=3}(2,2,1)=3$ and $Q_{=4}(2,2,1) = 2$.}
\end{figure}
We will need the \textit{destandardization} map that Assaf and Searles \cite{AS16} use (Definition 2.5).
\begin{definition}[{[AS16]}]
\normalfont The \textit{destandardization} of a tableau $T$, denoted by dst$(T)$, is the tableau constructed as follows. If the leftmost $i$ lies strictly right of the rightmost $i-1$ or there are no $i-1$ entries, then decrement every $i$ to $i-1$. Repeat until no more $i$ entries can be decremented.
\end{definition}
\begin{figure}[h]
	\begin{displaymath}
	\tableau{
	6&9\\
	3&4&7\\
	1&2&5&8\\}
	\ \ \raisebox{-10\unitlength}{$\longrightarrow$} \ \ 
	\tableau{
	3&4\\
	2&2&3\\
	1&1&2&3\\}
	\end{displaymath}
	\caption{\label{fig:destandardization} The destandardization of a semistandard tableau which also happens to be standard.}
\end{figure}
We also reproduce Lemma $2.6$ from their paper below, where we will use part $(3)$ and $(4)$ later.
\begin{lemma}[{[AS16]}] \label{SSYT} The destandardization map is well-defined and satisfies the following

\text{\normalfont(1)} for $T\in \text{\normalfont SSYT}_n(\lambda), \text{\normalfont dst}(T) \in \text{\normalfont QYT}_{\leq n}(\lambda)$;

\text{\normalfont(2)} for $T\in \text{\normalfont SSYT}_n(\lambda), \text{\normalfont dst}(T) = T$ if and only if $T\in \text{\normalfont QYT}_{\leq n}(\lambda)$;

\text{\normalfont(3)} $\text{\normalfont dst : SSYT}_n(\lambda) \rightarrow \text{\normalfont QYT}_{\leq n}(\lambda)$ is surjective; and 

\text{\normalfont(4)} $\text{\normalfont dst : SSYT}_n(\lambda) \rightarrow \text{\normalfont QYT}_{\leq n}(\lambda)$ is injective if and only if $n\leq \ell(\lambda)$.
\end{lemma}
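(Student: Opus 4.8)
The plan is to verify the four clauses directly from the definition of $\mathrm{dst}$, the only genuinely delicate point being that $\mathrm{dst}$ is \emph{well-defined}, i.e.\ that the terminal tableau is independent of the order in which decrementable values are processed. I would first record the two easy single-step facts. \textbf{(a)} If value $i$ satisfies the decrement condition in a semistandard tableau $S$ (leftmost $i$ strictly right of the rightmost $i-1$, or no $i-1$ present), then no column of $S$ contains both an $i$ and an $i-1$; hence replacing every $i$ by $i-1$ keeps columns strictly increasing, and it trivially keeps rows weakly increasing, so the step returns a semistandard tableau — and the decrement condition is exactly the hypothesis that makes this work. \textbf{(b)} Each step strictly decreases $\sum_u S(u)$, a nonnegative integer bounded below by $|\lambda|$, so the process terminates. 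By Newman's lemma it then suffices to check local confluence: if values $i<i'$ are both decrementable in some intermediate $S$, then $\mathrm{dec}_i(S)$ and $\mathrm{dec}_{i'}(S)$ can be joined. When $i'\ge i+2$ the operations touch disjoint values and neither disturbs the other's precondition, so they commute outright. The case $i'=i+1$ is the crux: decrementing $i$ empties value $i$, so $i+1$ becomes decrementable through the ``no $i-1$ entries'' clause; decrementing $i+1$ leaves $i$ decrementable, because $i+1$ decrementable in $S$ forces $\mathrm{leftmost}(i+1)>\mathrm{rightmost}(i)\ge\mathrm{leftmost}(i)$, so the leftmost cell of the merged $i$-block is still the old leftmost $i$; and a short chase then shows $\mathrm{dec}_i\mathrm{dec}_{i+1}\mathrm{dec}_i(S)=\mathrm{dec}_i\mathrm{dec}_{i+1}(S)$, the common tableau. (One could instead try to write a closed form for $\mathrm{dst}(T)$ by relabeling the distinct values of $T$ via pairwise comparisons, but the confluence route is cleaner to make rigorous.)

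With well-definedness in hand the first three clauses are short. Since every step only lowers entries and $T\in\mathrm{SSYT}_n(\lambda)$ has entries $\le n$, so does $\mathrm{dst}(T)$; and the process halts precisely when no value is decrementable, i.e.\ when for each value $i$ present there is an $i-1$ with $\mathrm{leftmost}(i)\le\mathrm{rightmost}(i-1)$, which is verbatim the quasi-Yamanouchi condition — this is (1). For (2), a step strictly decreases $\sum_u S(u)$, so $\mathrm{dst}(T)=T$ if and only if no step occurs, if and only if $T$ is already quasi-Yamanouchi; together with entries $\le n$ this says $T\in\mathrm{QYT}_{\le n}(\lambda)$. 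For (3), any $U\in\mathrm{QYT}_{\le n}(\lambda)$ lies in $\mathrm{SSYT}_n(\lambda)$ and is fixed by $\mathrm{dst}$ by (2), so $\mathrm{dst}$ restricts to the identity on $\mathrm{QYT}_{\le n}(\lambda)$ and is in particular surjective.

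Finally (4). If $n<\ell(\lambda)$, the first column cannot be filled by strictly increasing entries $\le n$, so $\mathrm{SSYT}_n(\lambda)=\mathrm{QYT}_{\le n}(\lambda)=\emptyset$ and $\mathrm{dst}$ is vacuously injective; if $n=\ell(\lambda)$, the first column of any $T\in\mathrm{SSYT}_n(\lambda)$ is forced to be $1,2,\dots,n$, so for each value $i$ the leftmost $i$ sits in column $1$ weakly left of the $i-1$ in column $1$, making $T$ quasi-Yamanouchi and $\mathrm{dst}$ again the identity, hence injective. Conversely, if $n>\ell(\lambda)$ I would exhibit a non-quasi-Yamanouchi member of $\mathrm{SSYT}_n(\lambda)$: take the minimal filling (every cell of row $j$ equal to $j$) and raise the rightmost cell of the top row from $\ell(\lambda)$ to $\ell(\lambda)+1\le n$; this stays semistandard, but the unique $\ell(\lambda)+1$ now lies strictly right of every $\ell(\lambda)$ (or no $\ell(\lambda)$ survives), so the tableau is not quasi-Yamanouchi. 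Then $T\ne\mathrm{dst}(T)$ are both in $\mathrm{SSYT}_n(\lambda)$ with $\mathrm{dst}(\mathrm{dst}(T))=\mathrm{dst}(T)$ by (1)--(2), so $\mathrm{dst}$ is not injective. I expect the confluence argument in the well-definedness step to be the only real obstacle; everything after it is bookkeeping against the two single-step facts.
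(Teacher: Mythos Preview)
The paper does not supply its own proof of this lemma; it is quoted from \cite{AS16} (their Lemma~2.6) and used as a black box, so there is nothing to compare against. Your argument is correct and self-contained. The local-confluence check via Newman's lemma is the right way to handle well-definedness, and the adjacent case $i'=i+1$ does go through as you sketch: both reduction orders send the old $i-1$-, $i$-, and $(i+1)$-cells to value $i-1$, since $i+1$ decrementable forces $\mathrm{leftmost}(i+1)>\mathrm{rightmost}(i)$ and $i$ decrementable forces $\mathrm{rightmost}(i)>\mathrm{rightmost}(i-1)$, so the merged blocks remain decrementable. Your witness in (4) for $n>\ell(\lambda)$ is also valid in the edge case $\lambda_{\ell(\lambda)}=1$, where raising the lone top cell to $\ell(\lambda)+1$ leaves no $\ell(\lambda)$ at all and the ``no $i-1$ entries'' clause of the decrement condition applies.
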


\section{Basic properties of quasi-Yamanouchi tableaux}

It is useful for the later results of this paper to know the range of maximum values that can appear in a quasi-Yamanouchi tableau. An easy lower bound is the number of rows, which follows from the definition. An easy upper bound is the number of boxes, since we cannot skip integers. It turns out that we can do a little better than this though.

\begin{lemma}\label{range}
Let $\lambda$ be a partition of $n$. If $T\in \text{\normalfont QYT}_{\leq n}(\lambda)$, then the maximum integer value that appears must be between $\ell(\lambda)$ and $n-(\lambda_1-1)$.
\end{lemma}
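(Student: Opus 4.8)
The plan is to establish the two bounds separately. The lower bound $\ell(\lambda)$ is immediate from the definition of quasi-Yamanouchi: since the entries strictly increase up each column, the leftmost column already forces the values $1$ through $\ell(\lambda)$ to appear in some form, so the maximum value is at least $\ell(\lambda)$. (Alternatively, this is already noted in the excerpt as the ``easy lower bound.'') The real work is the upper bound $n - (\lambda_1 - 1)$, and here the natural approach is to argue via runs. Suppose the maximum value appearing in $T$ is $m$. I would show that the quasi-Yamanouchi condition forces each of the ``transitions'' from value $i$ to value $i+1$ to be witnessed in a controlled way, and then count how many boxes must be ``used up'' to support $m$ distinct values.

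First I would set up the run structure: for a quasi-Yamanouchi tableau with maximum value $m$, group the boxes by value, obtaining nonempty sets $B_1, B_2, \ldots, B_m$ where $B_i$ is the set of boxes containing $i$. The quasi-Yamanouchi condition says that for each $i \ge 2$, the leftmost box of $B_i$ is weakly left of some box of $B_{i-1}$. The key observation I want is that the first row contributes at most one box to the ``new information'' at each level: more precisely, I would argue that since each $B_i$ is nonempty and occupies a contiguous horizontal strip in each row it touches, and since consecutive values must overlap in columns (from the quasi-Yamanouchi condition applied up the chain), all $m$ values together can occupy at most $\lambda_1$ columns but must be ``spread out'' enough that outside the first row there are at least $m-1$ boxes forced — equivalently, the first row can absorb at most one value's worth of the count past the minimum. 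Concretely, I expect the clean statement to be: the boxes not in the first row, of which there are $n - \lambda_1$, together with the leftmost box of the first row, must already realize all $m$ values, giving $m \le (n - \lambda_1) + 1 = n - (\lambda_1 - 1)$.

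To make that precise, here is the mechanism I would use. Consider the leftmost column; it has $\ell(\lambda)$ boxes with strictly increasing entries $a_1 < a_2 < \cdots < a_{\ell}$, and $a_1 = 1$ by the quasi-Yamanouchi condition (the smallest entry overall is $1$, and it must appear in the leftmost column or the condition cascades down). Now walk up from value $1$: each time the value increments as we move through the tableau reading in the quasi-Yamanouchi order, a new box is consumed, and I would show that at most $\lambda_1 - 1$ of these increments can be ``hidden'' within the first row (because the first row has only $\lambda_1$ boxes and they are weakly increasing, so the first row alone accounts for at most $\lambda_1$ distinct values, but its smallest value is $1$, so it accounts for at most $\lambda_1 - 1$ increments beyond the start). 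The remaining $m - 1 - (\lambda_1 - 1) = m - \lambda_1$ increments must each occur at a box strictly above the first row, and these are distinct boxes, so $m - \lambda_1 \le n - \lambda_1$, i.e. $m \le n$ — which is too weak. So I need the sharper bookkeeping: count that the boxes strictly above row $1$ number $n - \lambda_1$, and the values $\ell(\lambda), \ldots$ hmm — the honest version is that $m - (\lambda_1 - 1)$ values must appear among the $n - \lambda_1$ boxes above the first row \emph{together with adjustments}, yielding $m - (\lambda_1 - 1) \le n - \lambda_1 + 1$.

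The main obstacle, and the step I would spend the most care on, is pinning down exactly why at most $\lambda_1 - 1$ distinct values can be ``contributed only by the first row'' — i.e., ruling out tableaux where many values appear nowhere except the first row. The quasi-Yamanouchi condition is what prevents this: if value $i$ appeared only in the first row, then since its leftmost occurrence must be weakly left of some $i-1$, and strict column-increase forbids $i-1$ in the first row once $i$ is there in the same column, one traces a contradiction or forces $i-1$ also essentially pinned. I would formalize this by an extremal/induction argument on $m$, peeling off the top value: if $T$ has max value $m$, deleting all boxes with value $m$ leaves a quasi-Yamanouchi tableau of a smaller shape $\mu \subseteq \lambda$ with max value $m - 1$, and $\mu$ must lose at least one box, but crucially it can lose at most $\lambda_1 - 1$ boxes from the first row only when... — this is where I'd verify the base case and the inductive step carefully, as the interaction between ``how much of row $1$ gets stripped'' and ``$\mu_1$ versus $\lambda_1$'' is the delicate point.
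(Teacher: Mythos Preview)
Your proposal circles the right idea but never lands it; as written there is a genuine gap. Your first counting attempt yields only $m\le n$, and you acknowledge this. Your ``peel off the top value $m$'' induction is promising, but you stop precisely at the crux, namely controlling how many of the $m$'s lie in row~$1$. The clean fact you are missing is this: in any quasi-Yamanouchi tableau, every value $i\ge 2$ has at least one occurrence \emph{outside} row~$1$. Indeed, if all $i$'s sat in row~$1$, say in columns $c,\dots,d$, then the quasi-Yamanouchi condition forces some $i-1$ in a column $c'\ge c$; this $i-1$ cannot be in row~$1$ (row~$1$ is weakly increasing with value $\ge i$ from column $c$ onward), so it sits at $(c',r')$ with $r'\ge 2$, whence column-strictness gives the entry at $(c',1)$ value at most $i-2$, contradicting that row~$1$ has value $\ge i$ at column $c'\ge c$. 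Once you have this, the upper bound is immediate by counting: the $m-1$ values $2,3,\dots,m$ each occupy at least one of the $n-\lambda_1$ boxes above row~$1$, so $m-1\le n-\lambda_1$. Your inductive ``peel off $m$'' version also goes through with this fact, since it guarantees at least one $m$ is removed from outside row~$1$, so $\mu_1\ge \lambda_1 - (k-1)$ where $k$ is the number of $m$'s, and the induction closes.

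The paper takes a different route: it inducts on $n$ by removing a single corner box \emph{not in row~$1$} (so $\lambda_1$ is preserved), arguing that the entry there cannot exceed one more than the inductive bound for the smaller shape. That argument is shorter but correspondingly more informal (one must check that the restriction interacts correctly with the quasi-Yamanouchi condition). Your counting approach, once completed with the observation above, is arguably more transparent and avoids any subtlety about whether deleting a box preserves quasi-Yamanouchi-ness.
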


\begin{proof}
 We cannot have a maximum value less than $k$, otherwise our filling is not even semistandard. The lower bound is sharp however, as we can have a quasi-Yamanouchi filling where every entry in row $i$ of $T$ is $i$. The upper bound is sharp as well, and we will first show that it is achievable with a general construction. Start at the bottom of the first column and fill numbers in from the bottom to top, starting with $1$ and incrementing by $1$ at each step. Once the top of a column is reached, take the value at the top of this column and enter it again at the bottom of the next, then move upwards and continue to increment by $1$ until the top of the next column and repeat the process.

\begin{figure}[h]
	\begin{displaymath}
	\tableau{
	3&5&7\\
	2&4&6&8\\
	1&3&5&7\\}
	\end{displaymath}
	\caption{\label{fig:genconst1} The general construction applied to $(4,4,3)$.}
\end{figure}

We now prove inductively that this is the maximum achievable value. Assume that the bound holds for all shapes of size $n-1$. If $\lambda$ has more than one row, remove any corner box from $\lambda$ that is not in the first row, leaving a shape with $n-1$ boxes. By assumption, the highest achievable number in the resulting shape is known, and let this number be $m$. Since the box removed was not in the first row, $\lambda_1$ stays the same and $m = (n-1)-(\lambda_1-1)$. Now we add the box back. We cannot put any number greater than $m+1$ in the box; this would imply $m+1$ must appear elsewhere, which contradicts the inductive assumption. Thus the highest achievable value must be less than or equal to $m+1 = n-(\lambda_1-1)$.

If $\lambda$ has only one row, then the only quasi-Yamanouchi filling is the one where every entry is $1$, but in this case $n - (\lambda_1-1) = 1$ is clearly both an upper and lower bound.
\end{proof}

A related statement is also true, which is that for every value of $m$ within this range, QYT$_{=m}(\lambda) \neq 0$. This can also be shown by a constructive example. Start with a diagram and fill numbers in as if following the previous construction demonstrating the upper bound. Continue until reaching the first column where $m$ would appear. Fill this column as in the previous construction, but then decrement every element of the column until $m$ is at the top. For every subsequent column, fill every box with the entry directly to the left.

\begin{figure}[h]
	\begin{displaymath}
	\tableau{
	4&7\\
	3&6&8&8\\
	2&5&7&7&7\\
	1&4&6&6&6\\}
	\end{displaymath}
	\caption{\label{fig:genconst2} An example of the construction in QYT$_{=8}(5,5,4,2)$.}
\end{figure}
Using the destandardization map, we can describe the relationship between quasi-Yamanouchi tableau and standard Young tableau. The following bijection can also be found in \cite{AS16} as a special case of Theorem $4.9$. 

\begin{proposition}\label{SYTbij}
Let $\lambda = (\lambda_1, \lambda_1, \ldots, \lambda_k)$ be a partition of $n$ and $m \geq n-(\lambda_1-1)$. Then
$$\textup{QYT}_{\leq m}(\lambda) \cong \textup{SYT}(\lambda)$$
\end{proposition}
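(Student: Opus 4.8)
The plan is to strip the statement of its dependence on $m$ using Lemma~\ref{range}, and then realize the bijection as the destandardization map of Lemma~\ref{SSYT} restricted to standard tableaux.

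\emph{Reducing to $m=n$.} A quasi-Yamanouchi filling never skips a value, since $i\ge 2$ appearing forces $i-1$ to appear; hence a filling of the $n$-cell diagram $\lambda$ uses only the values $1,\dots,M$ with $M\le n$, so $\textup{QYT}_{\le m}(\lambda)\subseteq\textup{QYT}_{\le n}(\lambda)$ for every $m$. By Lemma~\ref{range} every element of $\textup{QYT}_{\le n}(\lambda)$ in fact has largest entry at most $n-\lambda_1+1\le m$, so equality holds: $\textup{QYT}_{\le m}(\lambda)=\textup{QYT}_{\le n}(\lambda)$ whenever $m\ge n-\lambda_1+1$. It therefore suffices to biject $\textup{QYT}_{\le n}(\lambda)$ with $\textup{SYT}(\lambda)$.

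\emph{The map and its explicit form.} Regard $\textup{SYT}(\lambda)\subseteq\textup{SSYT}_n(\lambda)$ and restrict $\textup{dst}$; by Lemma~\ref{SSYT}(1) this lands in $\textup{QYT}_{\le n}(\lambda)$. The first substantive step is to identify $\textup{dst}$ on a standard tableau $T$: writing the runs of $T$ as $U_1,\dots,U_r$, I claim $\textup{dst}(T)$ is the filling that puts value $j$ on every cell of $U_j$. Indeed, inside a run consecutive entries form no descent, so each lies strictly right of its predecessor, which is exactly the configuration in which the destandardization rule decrements; across a descent the next entry is weakly left and decrementing halts. So the algorithm collapses each run to a single value and no further. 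One then checks directly that this filling is semistandard (along a row the run index is weakly increasing, up a column strictly increasing, since two cells of one run occupy distinct columns) and quasi-Yamanouchi (the leftmost value-$j$ cell begins $U_j$, which by the descent separating $U_{j-1}$ from $U_j$ is weakly left of a value-$(j-1)$ cell).

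\emph{The inverse.} Given $Q\in\textup{QYT}_{\le n}(\lambda)$ with entries $1,\dots,m$, let $R_i$ denote the cells equal to $i$, a horizontal strip whose cells have pairwise distinct columns, and let $\psi(Q)$ label, for $i=1,2,\dots,m$ in turn, the cells of $R_i$ by the next $|R_i|$ integers in order of increasing column. This is a standard Young tableau, and the key point is that its descent set is exactly $\{\,|R_1|+\cdots+|R_i| : 1\le i<m\,\}$: within the block of a single $R_i$ consecutive labels move strictly right, giving no descent, while between the blocks of $R_{i-1}$ and $R_i$ there \emph{is} a descent, because the leftmost cell of $R_i$ (the leftmost $i$ in $Q$) is weakly left of some $(i-1)$-cell, hence weakly left of the rightmost $(i-1)$-cell, which carries the top label of $R_{i-1}$. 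Thus the runs of $\psi(Q)$ are precisely $R_1,\dots,R_m$, so by the previous step $\textup{dst}(\psi(Q))=Q$; conversely $\psi(\textup{dst}(T))=T$, because inside each run the entries of $T$ already increase with the column. Hence $\textup{dst}|_{\textup{SYT}(\lambda)}$ and $\psi$ are mutually inverse, which yields the claimed bijection.

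\emph{Where the work is.} The reduction and the verifications that $\textup{dst}(T)$ and $\psi(Q)$ have the asserted shapes are routine once organized; the one point requiring care is the equivalence hidden in the phrase ``weakly left of \emph{some} $i-1$'', namely that it coincides with ``weakly left of the \emph{rightmost} $i-1$'', since this is exactly the condition that produces, and is produced by, a descent at each value boundary, and it is what makes the two halves of the argument dovetail. (Alternatively, one may deduce the statement as the stable-$m$ specialization of Theorem~4.9 of \cite{AS16}.)
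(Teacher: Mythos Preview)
Your proof is correct and follows essentially the same approach as the paper: both use the destandardization map $\mathrm{dst}$ restricted to $\mathrm{SYT}(\lambda)$, identify its effect as collapsing each run to a single value, and exhibit the left-to-right relabelling as the explicit inverse. Your version is in fact more complete, since you handle the reduction from general $m\ge n-\lambda_1+1$ to $m=n$ via Lemma~\ref{range} explicitly, whereas the paper leaves this step implicit.
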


\begin{proof}
When applying the destandardization map to a standard Young tableau $T$, it is easier to consider the runs of the tableau. The destandardization map decrements all elements in the first run of $T$ to $1$, all elements of the second run to $2$, all elements of the third run to $3$, etc. The result is clearly a quasi-Yamanouchi filling by the definition of runs; the start of the next run is always weakly left of the end of the previous.

Now we show that the map has an inverse. As we visit boxes, let the $k$th box that we visit have its entry replaced by $k$. The order of moving through boxes is as follows. Start at the leftmost instance of $1$, then move one by one to the right until all $1$s have been visited in order from left to right. Move to the leftmost instance of $2$, then move one by one to the right until all $2$s have been visited in order from left to right. Repeat this for $3, 4, \ldots$ until every box has been visited. The result must be a standard Young tableau. To show that it is the same one that we started with, observe that a standard filling is determined by the shape of the tableau and the way that the runs partition the filling. This inverse map clearly gives a standard Young tableau with runs exactly corresponding to the placement of $1$s,~$2$s,~$\ldots$ in the quasi-Yamanouchi filling, which in turn correspond to the runs of our original tableau $T$.
\end{proof}

The bijection in Proposition \ref{SYTbij} imposes some extra structure on standard Young tableaux. Since $\{\textrm{QYT}_{=m}(\lambda)\ |\ \ell(\lambda) \leq m \leq n-(\lambda-1)\}$ partitions the set of quasi-Yamanouchi fillings of $\lambda$, they must also partition the set of standard Young tableau of shape $\lambda$. In particular, they partition this set by separating the fillings by the number of runs they contain. 
The bijection also gives this useful result on the symmetry of quasi-Yamanouchi fillings. The following theorem tells us that if we can enumerate a given shape, we can also enumerate its conjugate.

\begin{theorem}\label{symmetry}
Given a partition $\lambda$ and its conjugate $\lambda'$ of size $n$, 

$$QYT_{=m}(\lambda) =QYT_{=(n+1)-m}(\lambda')$$
\end{theorem}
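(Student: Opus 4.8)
The plan is to exploit the bijection of Proposition~\ref{SYTbij} together with the run-partition structure it induces. Recall from the discussion following that proposition that $\mathrm{QYT}_{=m}(\lambda)$ is in bijection with the set of standard Young tableaux of shape $\lambda$ having exactly $m$ runs; equivalently, having exactly $m-1$ descents. So the theorem is equivalent to the statement that the number of standard Young tableaux of shape $\lambda$ with $m-1$ descents equals the number of standard Young tableaux of shape $\lambda'$ with $(n+1-m)-1 = n-m$ descents. Since a standard Young tableau of size $n$ has between $0$ and $n-1$ descents, the target count is the number of SYT of shape $\lambda'$ with $(n-1)-(m-1)$ descents, i.e.\ with the ``complementary'' number of descents.

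The key step is the classical transpose-and-complement involution on standard Young tableaux. Given $T \in \mathrm{SYT}(\lambda)$, first transpose the diagram to get a filling of shape $\lambda'$, then replace each entry $i$ by $n+1-i$. The transpose step swaps the roles of rows and columns, so ``$i+1$ weakly left of $i$'' becomes ``$i+1$ weakly below $i$''; after relabeling $i \mapsto n+1-i$ and reversing the reading order, one checks that $i$ is a descent of the image exactly when $i$ is \emph{not} a descent of $T$ (this is the standard fact that $\mathrm{Des}(T^{\mathrm{t}}\text{-complemented}) = \{1,\dots,n-1\}\setminus \mathrm{Des}(T)$, read appropriately). Hence this map is a bijection $\mathrm{SYT}(\lambda) \to \mathrm{SYT}(\lambda')$ sending a tableau with descent set $D$ to one with descent set $\{1,\dots,n-1\}\setminus D$, so it carries the tableaux with $m-1$ descents to those with $(n-1)-(m-1) = n-m$ descents. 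Composing with the bijections of Proposition~\ref{SYTbij} on both ends gives $\mathrm{QYT}_{=m}(\lambda) \cong \mathrm{QYT}_{=n+1-m}(\lambda')$, which is the claim. One should double-check the edge cases where $m$ sits at the boundary $\ell(\lambda)$ or $n-(\lambda_1-1)$: under conjugation $\ell(\lambda) = \lambda_1'$ and $\lambda_1 = \ell(\lambda')$, so the range $\ell(\lambda)\le m \le n-(\lambda_1-1)$ maps exactly onto $\ell(\lambda') \le n+1-m \le n-(\lambda_1'-1)$, and both sides vanish outside this range consistently.

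The main obstacle is bookkeeping the descent set under the transpose-complement map carefully enough to be sure it is exactly set-complementation and not some shifted or reflected variant. The subtlety is that transposition alone does not simply swap ``left'' and ``below'' for the descent condition as stated (the descent condition ``$i+1$ weakly left of $i$'' is not symmetric under transpose on its own); it is the combination with the order-reversing relabeling $i\mapsto n+1-i$ that produces clean complementation. I would verify this on a small example — say $\lambda = (3,2)$ from Figure~\ref{fig:syt221} and its conjugate $(2,2,1)$ from Figure~\ref{fig:qyt221} — to confirm that $\mathrm{QYT}_{=3}(3,2)$ matches $\mathrm{QYT}_{=3}(2,2,1) = 3$ and $\mathrm{QYT}_{=4}(3,2)$ matches $\mathrm{QYT}_{=2}(2,2,1) = 2$ before committing to the general argument. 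An alternative, if one prefers to avoid the descent-set bookkeeping, is to argue directly on quasi-Yamanouchi tableaux: build the map by taking $T \in \mathrm{QYT}_{=m}(\lambda)$, lifting through $\mathrm{dst}$ to a canonical standard filling, transposing, complementing, and destandardizing again; but this ultimately relies on the same descent-complementation fact, so the cleanest exposition routes through Proposition~\ref{SYTbij} explicitly.
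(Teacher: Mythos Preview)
Your overall strategy coincides with the paper's: translate via Proposition~\ref{SYTbij} to standard Young tableaux with a prescribed number of descents, exhibit a bijection $\mathrm{SYT}(\lambda)\to\mathrm{SYT}(\lambda')$ that complements the descent set, and translate back. That is exactly what the paper does.

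The error is in the middle step. You assert that ``transposition alone does not'' complement the descent set and that one must compose with the relabeling $i\mapsto n+1-i$. Both claims are wrong. First, in a standard Young tableau the conditions ``$i+1$ is weakly left of $i$'' and ``$i+1$ is in a strictly higher row than $i$'' are equivalent (if $i+1$ were weakly left and weakly below, the increasing row and column conditions would force a contradiction). Hence $i\in\mathrm{Des}(T)$ iff $i+1$ lies in a strictly higher row. Under transposition, rows and columns swap, so $i\in\mathrm{Des}(T')$ iff $i+1$ lies in a strictly higher \emph{column} of $T$, i.e.\ strictly right of $i$ in $T$, i.e.\ $i\notin\mathrm{Des}(T)$. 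Thus $\mathrm{Des}(T')=\{1,\dots,n-1\}\setminus\mathrm{Des}(T)$ from conjugation alone, which is precisely the paper's argument. Second, your proposed map is not even well defined: applying $i\mapsto n+1-i$ to a standard Young tableau of shape $\lambda'$ produces a filling that decreases along rows and columns, not a standard Young tableau of shape $\lambda'$ (try it on the transpose of the first tableau in Figure~\ref{fig:syt221}). So the ``transpose-and-complement'' step must be dropped; plain conjugation is both sufficient and correct.

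A small slip in your sanity check: for $\lambda=(3,2)$ the valid range of $m$ is $2\le m\le 3$, so $\mathrm{QYT}_{=4}(3,2)=0$, and on the conjugate side $\mathrm{QYT}_{=2}(2,2,1)=0$ as well (the range there is $3\le m\le 4$). The nontrivial checks are $\mathrm{QYT}_{=2}(3,2)=\mathrm{QYT}_{=4}(2,2,1)=2$ and $\mathrm{QYT}_{=3}(3,2)=\mathrm{QYT}_{=3}(2,2,1)=3$.
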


\begin{proof} Start with a quasi-Yamanouchi tableau $Q$ in QYT$_{=m}(\lambda)$ and restandardize it as in Proposition \ref{SYTbij}. The descent set of our standard Young tableau $T$ marks the ends of runs, except for the last run, so $|$Des$(T)| = m-1$. If we conjugate $T$ to get $T'$, then if $i+1$ was weakly left of $i$ in $T$, it becomes strongly right in $T'$, and if $i+1$ was strongly right of $i$ in $T$, it becomes weakly left in $T'$. Thus, the descent set of $T'$ is exactly the complement of Des$(T)\subseteq[n-1]$. The number of elements in Des$(T')$ then is $|$Des$(T')| = (n-1) - |$Des$(T)| = (n-1) - (m-1) = n-m$. This means that $T'$ has $n-m+1$ many runs. Then if we apply the destandardization map to $T'$, we will get a quasi-Yamanouchi tableau $Q'$ where the max value that appears is $(n+1)-m$. In other words, we get an element of QYT$_{=(n+1)-m}(\lambda')$. Since the restandardization map, conjugation map, and destandardization map are all bijective, we have a bjijection between QYT$_{=m}(\lambda)$ and QYT$_{=(n+1)-m}(\lambda')$. 
\end{proof}

\section{The main theorem}
Throughout this section, we will use $\lambda = (\lambda_1, \lambda_2, 2^{h_2-2}, 1^{h_1-h_2})$ to denote a partition with Durfee size $2$ and with first row length $\lambda_1$, second row length $\lambda_2$, first column height $h_1$, and second column height $h_2$. 
\begin{figure}[h]
	\begin{displaymath}
	\tableau{
	\ \\
	\ & \ \\
	\ & \ \\
	\ & \ & \ & \ & \\
	\ & \ & \ & \ & \ & \ \\
	}
	\end{displaymath}
	\caption{\label{fig:notationex} The diagram corresponding to $(6, 4, 2^{4-2}, 1^{5-4}) = (6, 4, 2, 2, 1)$.}
\end{figure}

QYT$_{=m}(\lambda_1,\lambda_2, 2^{h_2-2}, 1^{h_1-h_2})$ will count the number of quasi-Yamanouchi tableau of Durfee size two with first row length $\lambda_1$, second row length $\lambda_2$, first column height $h_1$, second column height $h_2$, and maximum value that appears in the tableau exactly $m$.  The main goal of this paper is to prove the following theorem, which has a slightly nicer looking form in section 5.

\begin{theorem}\label{maintheorem}
Let $\lambda = (\lambda_1,\lambda_2,2^{h_2-2}, 1^{h_1-h_2})$ be a partition of size $n$ with $\lambda_1, \lambda_2,h_1,h_2 \geq 2$. If $h_1 \leq m \leq n-(\lambda_1 -1)$, then

\noindent\resizebox{.84\linewidth}{!}{
  \begin{minipage}{\linewidth}
\begin{equation*}
\begin{aligned}
&\textrm{QYT}_{=m}(\lambda) = \frac{\lambda_1-\lambda_2 +1}{m-h_2+1}{\lambda_1+h_1-2 \choose m-h_2}{\lambda_2+h_1-3 \choose m-h_2}{m-h_2\choose m-h_1}{\lambda_2 + h_2-4\choose h_2-2}{\lambda_1+h_2-3\choose h_2-2} \frac{1}{{h_1-1 \choose h_2-2}}
\end{aligned}
\end{equation*}
\end{minipage}
}
\end{theorem}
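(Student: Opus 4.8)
The plan is to turn the enumeration into a classical descent‑count of standard Young tableaux, convert that into an alternating binomial sum via the hook–content formula, and then evaluate the sum with the Pfaff--Saalsch\"{u}tz identity.

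\smallskip

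\textbf{Step 1: reduce to counting standard tableaux by descents.} By the destandardization bijection of Proposition~\ref{SYTbij}, a standard Young tableau of shape $\lambda$ with exactly $k$ runs destandardizes (by decrementing the entries of the $j$th run to $j$) to a quasi‑Yamanouchi tableau whose largest entry is exactly $k$, and this is a bijection. Hence $\mathrm{QYT}_{=m}(\lambda)=\beta_\lambda(m-1)$, where $\beta_\lambda(s)$ denotes the number of standard Young tableaux of shape $\lambda$ with descent set of size exactly $s$; outside the range $\ell(\lambda)\le m\le n-(\lambda_1-1)$ both sides vanish by Lemma~\ref{range}. (One may additionally invoke Theorem~\ref{symmetry} to assume $m\le\tfrac{n+1}{2}$; this is a convenient consistency check on the answer but is not logically needed.)

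\smallskip

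\textbf{Step 2: pass to generating functions.} Specializing Gessel's fundamental expansion $s_\lambda=\sum_{T\in\mathrm{SYT}(\lambda)}F_{\mathrm{Des}(T),n}$ to $N$ variables all equal to $1$, and using that a fundamental quasisymmetric polynomial indexed by a set of size $s$ evaluates to $\binom{N+n-1-s}{n}$, gives $\mathrm{SSYT}_N(\lambda)=\sum_s\beta_\lambda(s)\binom{N+n-1-s}{n}$. This triangular relation is inverted by multiplying $\sum_{N\ge1}\mathrm{SSYT}_N(\lambda)\,x^{N-1}$ by $(1-x)^{n+1}$, which yields
\[
\mathrm{QYT}_{=m}(\lambda)=\beta_\lambda(m-1)=\sum_{i\ge0}(-1)^i\binom{n+1}{i}\,\mathrm{SSYT}_{m-i}(\lambda).
\]
I would then evaluate $\mathrm{SSYT}_N(\lambda)$ explicitly from Stanley's hook–content formula: reading off the contents and hook‑lengths of $\lambda=(\lambda_1,\lambda_2,2^{h_2-2},1^{h_1-h_2})$ cell by cell, numerator and denominator each telescope into products of factorials, giving
\[
\mathrm{SSYT}_N(\lambda)=C\cdot\frac{(N+\lambda_1-1)!\,(N+\lambda_2-2)!}{(N-h_1)!\,(N+1-h_2)!}
\]
for an explicit constant $C=C(\lambda_1,\lambda_2,h_1,h_2)$. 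Since $\mathrm{SSYT}_j(\lambda)=0$ for $j<h_1$, the sum over $i$ is finite and uses only genuine tableau counts.

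\smallskip

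\textbf{Step 3: hypergeometric evaluation.} Substituting this closed form and factoring out the $i=0$ term, the alternating sum becomes a terminating ${}_3F_2$ evaluated at $1$. The decisive point is that it is \emph{Saalsch\"{u}tzian}: writing $n+1=\lambda_1+\lambda_2+h_1+h_2-3$, the balance condition $1+a_1+a_2+a_3=b_1+b_2$ holds identically, so the Pfaff--Saalsch\"{u}tz theorem evaluates the sum as a ratio of four Pochhammer symbols. Expanding those into factorials — here one checks, using $\lambda_1,\lambda_2,h_1,h_2\ge2$ together with $h_1\le m\le n-(\lambda_1-1)$, that every factorial argument that appears is nonnegative — then multiplying by $C$ and by the $i=0$ term and regrouping the resulting quotient of factorials into binomial coefficients produces exactly the stated product. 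The main obstacle is this last step: recognizing the sum as a balanced ${}_3F_2$ and tracking its five parameters correctly through Pfaff--Saalsch\"{u}tz; once that is in place, matching the displayed formula is routine factorial bookkeeping.
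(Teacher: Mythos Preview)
Your proposal is correct and takes a genuinely different route from the paper's argument. The paper proceeds by induction on $|\lambda|$: it first handles boundary cases (some $\lambda_i$ or $h_i$ equal to $2$, or $m$ extremal) separately, then in the generic case performs a case analysis on where the value $m$ sits in the first two columns, derives a multi-term recurrence expressing $\mathrm{QYT}_{=m}(\lambda)$ in terms of smaller shapes, and finally verifies by brute factorial expansion that the stated product satisfies this recurrence. Your approach instead identifies $\mathrm{QYT}_{=m}(\lambda)$ with the number of standard tableaux of shape $\lambda$ with $m-1$ descents, writes this as the alternating binomial transform $\sum_i(-1)^i\binom{n+1}{i}\mathrm{SSYT}_{m-i}(\lambda)$, evaluates $\mathrm{SSYT}_N(\lambda)$ for this Durfee-two shape as $C\cdot(N+\lambda_1-1)!\,(N+\lambda_2-2)!/\bigl((N-h_1)!\,(N-h_2+1)!\bigr)$, and recognizes the resulting sum as a balanced terminating ${}_3F_2(1)$ to which Pfaff--Saalsch\"utz applies (the balance check $1+a_1+a_2+a_3=b_1+b_2$ reducing exactly to $n=\lambda_1+\lambda_2+h_1+h_2-4$). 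Your method is more systematic and explains \emph{why} a product formula exists for Durfee size two: the hook--content numerator collapses to a ratio of four factorials, yielding a ${}_3F_2$ rather than a higher ${}_pF_q$, and it happens to be Saalsch\"utzian. The paper's induction, by contrast, is elementary and self-contained but gives no structural reason for the product form. One caution worth recording explicitly in your write-up is that all five hypergeometric parameters are negative integers; you should note that the denominator parameters $-(m+\lambda_1-1)$ and $-(m+\lambda_2-2)$ are strictly more negative than the terminating numerator parameter $-(m-h_1)$, so no Pochhammer in the denominator vanishes before the series terminates.
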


In the case where the tableau is of Durfee size $1$, it follows from Lemma \ref{SSYT} and Lemma \ref{range} that the set of quasi-Yamanouchi tableaux is equal to the set of semistandard Young tableaux where entries are bounded above by $h_1$.

Before we can approach the general case, there are some special cases which need to be addressed first. These cases involve diagrams where at least one of $\lambda_1, \lambda_2,h_1,$ or $h_2$ is exactly equal to $2$. 
When all four of $\lambda_1,\lambda_2,h_1,h_2 = 2$ so that $\lambda = (2, 2)$, it is easy to check by hand that the formula is correct. For cases which are conjugate shapes of other cases, only only one needs to be checked thanks to the following result, which essentially tells us that our formula implies Theorem \ref{symmetry}.

\begin{proposition}\label{symmetry2}
For $\lambda_1, \lambda_2, h_1, h_2, \geq 2$, $h_1 \leq m \leq h_1+h_2 + \lambda_2-3$, and $m' = \lambda_1+\lambda_2+h_1+h_2-3-m$, we have 

\noindent\resizebox{.95\linewidth}{!}{
  \begin{minipage}{\linewidth}
\begin{equation*}
\begin{aligned}
&\frac{\lambda_1-\lambda_2 +1}{m-h_2+1}{\lambda_1+h_1-2 \choose m-h_2}{\lambda_2+h_1-3 \choose m-h_2}{m-h_2\choose m-h_1}{\lambda_2 + h_2-4\choose h_2-2}{\lambda_1+h_2-3\choose h_2-2} \frac{1}{{h_1-1 \choose h_2-2}} =\\
&\frac{h_1-h_2+1}{m'-\lambda_2+1}{h_1+\lambda_1-2 \choose m' - \lambda_2}{h_2+\lambda_1-3\choose m'-\lambda_2}{m'-\lambda_2\choose m'-\lambda_1}{h_1+\lambda_2-3\choose \lambda_2-2}{h_2+\lambda_2-4\choose \lambda_2-2}\frac{1}{{\lambda_1-1\choose \lambda_2-2}}
\end{aligned}
\end{equation*}
\end{minipage}
}
\end{proposition}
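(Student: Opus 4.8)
The plan is to prove this purely algebraically, treating it as a binomial coefficient identity. First I would substitute $m' = \lambda_1+\lambda_2+h_1+h_2-3-m$ everywhere on the right-hand side and rewrite each factor in terms of the original parameters. For instance $m' - \lambda_2 = \lambda_1 + h_1 + h_2 - 3 - m$ and $m' - \lambda_1 = \lambda_2 + h_1 + h_2 - 3 - m$, so the right-hand side becomes a product of binomials whose arguments are affine in $\lambda_1,\lambda_2,h_1,h_2,m$. The key observation is that the map $m \mapsto m'$ is the same involution that underlies Theorem~\ref{symmetry} (with $n = \lambda_1+\lambda_2 + (h_2-2)\cdot 2 + (h_1 - h_2) = \lambda_1+\lambda_2+h_1+h_2-4$, one checks $m' = (n+1) - m$ on the relevant index shift), so morally this proposition is just the statement that the closed form is invariant under conjugation $\lambda \leftrightarrow \lambda'$, which swaps $(\lambda_1,\lambda_2) \leftrightarrow (h_1,h_2)$ up to the Durfee-size-two bookkeeping.

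The concrete computation I would carry out is to pair up factors between the two sides. The factors ${\lambda_2+h_2-4 \choose h_2-2}$ on the left and ${h_2+\lambda_2-4\choose \lambda_2-2}$ on the right are literally equal, since ${a+b\choose a} = {a+b \choose b}$. Next I would match ${\lambda_1+h_2-3\choose h_2-2}$ with ${h_2+\lambda_1-3\choose m'-\lambda_2}$: after the substitution, $m' - \lambda_2 = \lambda_1+h_1+h_2-3-m$, and these are equal iff $\lambda_1+h_1+h_2-3-m = h_2 - 2$ — which is \emph{not} an identity, so a naive factor-by-factor matching fails, and the real content is a rearrangement. The honest approach is therefore to clear all the denominators (the $\frac{1}{m-h_2+1}$, the $\frac{1}{{h_1-1\choose h_2-2}}$, and their primed analogues), expand every binomial as a ratio of factorials, and verify that the two resulting products of factorials agree as rational functions. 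This reduces to checking that a certain multiset of factorial arguments on the left equals the corresponding multiset on the right; because every argument is an affine function of the five parameters, this is a finite check once the substitution $m \to m'$ is made.

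I expect the main obstacle to be purely organizational: there are roughly a dozen factorials in the numerator and a dozen in the denominator on each side once everything is expanded, and one must bookkeep the cancellations carefully — in particular the prefactors $\frac{\lambda_1-\lambda_2+1}{m-h_2+1}$ and $\frac{h_1-h_2+1}{m'-\lambda_2+1}$ do not correspond to single factorials and have to be absorbed by noticing, e.g., that ${\lambda_1+h_1-2\choose m-h_2}\frac{1}{m-h_2+1} = \frac{1}{\lambda_1+h_1-1}{\lambda_1+h_1-1 \choose m-h_2+1}$ and similar "absorption" identities, which convert the prefactor into a shift of one of the neighboring binomials. A cleaner alternative, which I would pursue in parallel, is to invoke Theorem~\ref{symmetry} directly: we already know $\textrm{QYT}_{=m}(\lambda) = \textrm{QYT}_{=(n+1)-m}(\lambda')$ unconditionally, and conjugating $\lambda = (\lambda_1,\lambda_2,2^{h_2-2},1^{h_1-h_2})$ gives $\lambda' = (h_1,h_2,2^{\lambda_2-2},1^{\lambda_1-\lambda_2})$, so the right-hand side of the proposition is exactly what Theorem~\ref{maintheorem}'s formula produces when applied to $\lambda'$ at index $m' = (n+1)-m$ (noting $m'$ here equals $n+1-m$ since $n = \lambda_1+\lambda_2+h_1+h_2-4$). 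Thus, \emph{if} one is willing to defer to the main theorem, the proposition is immediate; but since the proposition is used \emph{en route} to proving the main theorem, the algebraic verification is the logically safe route and is the one I would write up in full.
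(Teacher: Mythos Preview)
Your proposal is correct and takes essentially the same approach as the paper: the paper's proof is the single sentence ``Equality comes easily after expanding the binomial coefficients into factorials and cancelling terms,'' which is exactly the ``honest approach'' you settle on after (rightly) discarding a naive factor-by-factor matching. You are also right to flag that deducing the identity from Theorem~\ref{symmetry} plus Theorem~\ref{maintheorem} would be circular, since Proposition~\ref{symmetry2} is invoked in the proof of the main theorem.
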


\begin{proof} Equality comes easily after expanding the binomial coefficients into factorials and cancelling terms.
\end{proof}

We also have the following lemma, which proves that the theorem holds whenever $m$ is either minimal or maximal in the range given by Lemma \ref{range}. This lemma immediately proves the special cases where $\lambda_2 = h_2 = 2$ and also allows us to exclude the minimal and maximal values of the range from the statement of the general case. This is a computational convenience that allows that proof to go more smoothly.

\begin{lemma}\label{minmaxm}
Let $\lambda = (\lambda_1,\lambda_2, 2^{h_2-2}, 1^{h_1-h_2})$ be a partition of size $n$ with $\lambda_1,\lambda_2,$ $h_1,h_2 \geq 2$. Then if $m=h_1$ or $m = n-(\lambda_1-1)$, 

\noindent\resizebox{.84\linewidth}{!}{
  \begin{minipage}{\linewidth}
\begin{equation*}
\begin{aligned}
QYT_{=m}(\lambda) = \frac{\lambda_1-\lambda_2 +1}{m-h_2+1}{\lambda_1+h_1-2 \choose m-h_2}{\lambda_2+h_1-3 \choose m-h_2}{m-h_2\choose m-h_1}{\lambda_2 + h_2-4\choose h_2-2}{\lambda_1+h_2-3\choose h_2-2} \frac{1}{{h_1-1 \choose h_2-2}}
\end{aligned}
\end{equation*}
\end{minipage}
}
\end{lemma}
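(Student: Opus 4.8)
The plan is to handle the two boundary values of $m$ separately, exploiting the fact that at these extremes the quasi-Yamanouchi tableaux become rigid enough to count directly, and then to check that the rigid count agrees with the claimed product formula after simplification. By Proposition \ref{symmetry2}, the formula is compatible with conjugation, and conjugation sends the shape $\lambda=(\lambda_1,\lambda_2,2^{h_2-2},1^{h_1-h_2})$ (again of Durfee size two) to a shape of the same type with the roles of $(\lambda_1,\lambda_2)$ and $(h_1,h_2)$ swapped, while Theorem \ref{symmetry} sends $m=h_1$ to $m'=n-(h_1-1)$ and vice versa. So it suffices to treat just one of the two cases, say $m=h_1$, the minimal value; the case $m=n-(\lambda_1-1)$ then follows by applying the already-established case to $\lambda'$ and invoking Theorem \ref{symmetry} together with Proposition \ref{symmetry2} to match the two closed forms.

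For the minimal case $m=h_1$, I would argue via the bijection of Proposition \ref{SYTbij} and the run decomposition: an element of $\mathrm{QYT}_{=h_1}(\lambda)$ corresponds to a standard Young tableau of shape $\lambda$ with exactly $h_1$ runs, i.e.\ with $|\mathrm{Des}(T)|=h_1-1$. Since $\lambda$ has $h_1$ rows, a standard tableau always has at least $h_1-1$ descents (each time we pass from one row's maximal entry to the next the value $i+1$ sits weakly left of $i$ only in constrained ways); the tableaux achieving the minimum $h_1-1$ are exactly those whose runs are forced to be ``as horizontal as possible.'' Concretely I expect to show that a standard tableau of shape $\lambda$ with $h_1-1$ descents is determined by how the entries split across the first two columns versus the two long rows, which amounts to choosing a lattice path / a pair of interleaving sequences; counting these choices gives a product of binomial coefficients. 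Alternatively — and this may be cleaner — one can use the destandardization map directly: with $m=h_1$ minimal, the quasi-Yamanouchi condition forces every entry in row $i$ (for $i\geq 3$, inside the single-column and double-column part) to be essentially pinned, so only the fillings of the first two rows and first two columns carry freedom, reducing the count to an SSYT-type enumeration on a small skew shape that the hook-content formula (Theorem 2.3) evaluates as a product.

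Once the count is expressed as a product of binomials, the remaining work is to verify the algebraic identity: the combinatorial product equals the specialization of the displayed formula at $m=h_1$. At $m=h_1$ the factor $\binom{m-h_2}{m-h_1}=\binom{h_1-h_2}{0}=1$ drops out, and $\binom{\lambda_1+h_1-2}{m-h_2}=\binom{\lambda_1+h_1-2}{h_1-h_2}$, $\binom{\lambda_2+h_1-3}{h_1-h_2}$ simplify; the prefactor becomes $\frac{\lambda_1-\lambda_2+1}{h_1-h_2+1}$. I would expand all binomials into factorials and cancel. The main obstacle, I expect, is not the algebra but pinning down the right combinatorial model for ``shape $\lambda$, exactly $h_1-1$ descents'' cleanly enough that the binomial product falls out without case analysis on whether $h_2=2$ or $\lambda_2=2$; handling those degenerate sub-shapes (where a column or the second row has length $2$, so the Durfee-two structure partially collapses) uniformly is where care is needed, and it is precisely why the lemma is being proved in advance — to remove $m=h_1$ and $m=n-(\lambda_1-1)$ from the general argument and to dispatch the $\lambda_2=h_2=2$ special case, for which the range of $m$ from Lemma \ref{range} consists only of these two endpoints.
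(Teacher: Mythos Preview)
Your reduction of the $m=n-(\lambda_1-1)$ endpoint to the $m=h_1$ endpoint via Theorem~\ref{symmetry} and Proposition~\ref{symmetry2} is exactly what the paper does, so that half is fine.

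For the base case $m=h_1$, however, you are working much harder than necessary and your alternative~(b) is not right as stated. The claim that for $i\geq 3$ the entries in row~$i$ are ``essentially pinned'' is false: already for $\lambda=(3,3,2,1)$ with $h_1=4$, the first column is forced to be $1,2,3,4$, but the cell in row~$3$, column~$2$ can be either $3$ or $4$ in a valid element of $\mathrm{QYT}_{=4}(\lambda)$. So the reduction to a ``small skew shape'' does not hold, and approach~(a) via descent-counting, while possibly salvageable, is left as a sketch with the combinatorics unresolved.

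The observation you are missing is already sitting in Lemma~\ref{SSYT}. Parts~(3) and~(4) say that $\mathrm{dst}:\mathrm{SSYT}_n(\lambda)\to\mathrm{QYT}_{\leq n}(\lambda)$ is a bijection precisely when $n\leq\ell(\lambda)$. Here $\ell(\lambda)=h_1$, so $\mathrm{QYT}_{\leq h_1}(\lambda)\cong\mathrm{SSYT}_{h_1}(\lambda)$. Since $h_1$ is the minimum of the range in Lemma~\ref{range}, $\mathrm{QYT}_{=h_1}(\lambda)=\mathrm{QYT}_{\leq h_1}(\lambda)$, and therefore $\mathrm{QYT}_{=h_1}(\lambda)=\mathrm{SSYT}_{h_1}(\lambda)$ on the \emph{full} shape~$\lambda$, not a skew piece. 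The hook-content formula now gives the count as a product, and the only remaining task is the purely algebraic check that the displayed formula at $m=h_1$ collapses to $\prod_{u\in\lambda}\frac{h_1+c(u)}{h(u)}$. This is the paper's argument, and it sidesteps all of the case analysis you anticipate.
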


\begin{proof}
When $m=h_1$, the expression reduces algebraically to the hook-content formula for SSYT$_{h_1}(\lambda)$, and the result then follows from Lemma \ref{SSYT}. When $m=n-(\lambda_1-1)$, the expression must be equal to SSYT$_{\lambda_1}(\lambda')$ by Theorem \ref{symmetry}. Then applying Proposition \ref{symmetry2} and simplifying the right-hand side as before produces the hook-content formula for SSYT$_{\lambda_1}(\lambda')$.
\end{proof}

The remaining special cases have proofs which are essentially just easier versions of the general proof, so we collect all the special cases into one lemma.

\begin{lemma}\label{specialcases}
Let $\lambda = (\lambda_1, \lambda_2, 2^{h_2-2}, 1^{h_1-h_2})$ be a partition of size $n$ where at least one of $\lambda_1,\lambda_2,h_1,h_2 $ is equal to $2$. Then

\noindent\resizebox{.84\linewidth}{!}{
  \begin{minipage}{\linewidth}
$$QYT_{=m}(\lambda) = \frac{\lambda_1-\lambda_2 +1}{m-h_2+1}{\lambda_1+h_1-2 \choose m-h_2}{\lambda_2+h_1-3 \choose m-h_2}{m-h_2\choose m-h_1}{\lambda_2 + h_2-4\choose h_2-2}{\lambda_1+h_2-3\choose h_2-2} \frac{1}{{h_1-1 \choose h_2-2}}$$
\end{minipage}
}
\end{lemma}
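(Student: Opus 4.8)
The plan is to handle each of the special cases $\lambda_1 = 2$, $\lambda_2 = 2$, $h_1 = 2$, and $h_2 = 2$ by reducing to a situation that is either already covered or amenable to a direct bijective/enumerative count. First I would observe that several of these cases are conjugate to one another: since conjugation swaps $(\lambda_1,\lambda_2)$ with $(h_1,h_2)$ (roughly speaking, rows become columns), the case $\lambda_2 = 2$ is conjugate to $h_2 = 2$ and the case $\lambda_1 = 2$ forces the shape into a hook-like or near-rectangular form whose conjugate has $h_1$ small. By Theorem \ref{symmetry} together with Proposition \ref{symmetry2}, verifying the formula on one member of each conjugate pair suffices, so I would reduce the list of genuinely distinct cases to (say) $h_2 = 2$ and $\lambda_1 = 2$, modulo the trivial overlaps where two of the parameters equal $2$ simultaneously. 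The cases with $\lambda_2 = h_2 = 2$ are already dispatched by Lemma \ref{minmaxm} (there the allowable range of $m$ collapses so that every $m$ is either minimal or maximal), and $\lambda = (2,2)$ is checked by hand, so those can be cited rather than reproved.

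For the remaining cases the strategy mirrors the general proof: use the bijection with standard Young tableaux refined by number of runs. By the discussion following Proposition \ref{SYTbij}, $\mathrm{QYT}_{=m}(\lambda)$ equals the number of standard Young tableaux of shape $\lambda$ with exactly $m$ runs, equivalently with descent set of size $m-1$. So I would set up the count of standard tableaux of the given degenerate shape with a prescribed number of descents. When $h_2 = 2$, the shape is $(\lambda_1, \lambda_2, 1^{h_1 - 2})$ — a "fat hook" with only one cell in the second row beyond the first column — and its standard fillings, graded by descents, can be enumerated by a lattice-path or ballot-type argument: choose which entries go in the arm (first row past column $2$), which go in the leg (first column above row $1$), and which small entries fill the $2\times 2$-ish corner, tracking descents at each interface. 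When $\lambda_1 = 2$ the shape is $(2^{h_2-1}, 1^{h_1 - h_2})$ up to the constraint $\lambda_1,\lambda_2 \ge 2$, i.e. essentially a double column with a tail, and the same kind of count applies with the roles of rows and columns exchanged; in fact one expects this to fall out of the $h_2 = 2$ computation by conjugation.

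The main obstacle I anticipate is purely bookkeeping: matching the closed product on the right-hand side of the statement to whatever binomial expression the run-counting argument naturally produces. In the degenerate cases one of the six binomial factors (or the correction factor $1/\binom{h_1-1}{h_2-2}$) either becomes $1$ or combines with a neighbor, and one has to verify that the surviving factors assemble — after a Vandermonde-type convolution collapsing the sum over how descents distribute between arm and leg — into exactly the stated formula. I would carry this out by first writing the run count as a single explicit sum of products of binomials, then applying the Chu–Vandermonde identity to eliminate the summation, and finally expanding everything into factorials and cancelling, exactly as in the proof of Proposition \ref{symmetry2}. The conjugation reductions should be stated first so that only the two essentially-different computations remain, keeping the lemma's proof short.
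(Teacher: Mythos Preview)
Your organizational reductions (conjugation via Theorem~\ref{symmetry} and Proposition~\ref{symmetry2}, and Lemma~\ref{minmaxm} for the $\lambda_2 = h_2 = 2$ overlap) match the paper's, but your method for the surviving cases is genuinely different. The paper does not carry out a direct descent-count at all: it states that the special-case proofs are ``essentially just easier versions of the general proof,'' meaning the same inductive recurrence mechanism used for Theorem~\ref{maintheorem} --- remove all boxes containing $m$, sum over the smaller shapes that result, telescope to a short linear relation among $\mathrm{QYT}$ values on strictly smaller partitions, and then verify algebraically (factorials, cancel) that the product formula satisfies it. Your route instead reads $\mathrm{QYT}_{=m}(\lambda)$ as the number of SYT of shape $\lambda$ with exactly $m-1$ descents and proposes to compute that directly by a ballot/lattice-path argument followed by a Chu--Vandermonde collapse. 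That is a legitimate alternative, and if it goes through it has the advantage of attaching combinatorial meaning to the individual binomial factors rather than ending in an opaque algebraic check; the paper's approach, by contrast, reuses machinery already in place and so keeps the lemma's proof essentially free.

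One concrete caution: your description of the $h_2 = 2$ shape is off. The partition $(\lambda_1,\lambda_2,1^{h_1-2})$ does not have ``only one cell in the second row beyond the first column'' --- it has $\lambda_2 - 1$ such cells, and since the subcase $\lambda_2 = 2$ is already absorbed by Lemma~\ref{minmaxm}, the residual instance has $\lambda_2 \ge 3$. That full second row creates additional descent positions (within row two, and between rows one and two at multiple columns), so the ``arm / leg / $2\times 2$ corner'' decomposition you sketch is more intricate than for an honest hook. Before committing to this route you should check that the resulting sum really is a single Vandermonde convolution; if it turns out to be a double sum that does not collapse cleanly, the paper's recurrence method is the safe fallback, since in these degenerate shapes several terms of the general recurrence vanish and the remaining algebra is short.
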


We are now ready for the general case.

\textbf{Proof of Theorem \ref{maintheorem}.} Let $\lambda = (\lambda_1, \lambda_2, 2^{h_2-2}, 1^{h_1-h_2})$. If any of $\lambda_1, \lambda_2, h_1,$ or $h_2$ are equal to $2$, then we are done by Lemma \ref{specialcases}, and likewise if $m$ equals $h_1$ or $n-(\lambda_1-1)$, we can apply Lemma \ref{minmaxm}. Thus, we will consider when $\lambda_1, \lambda_2, h_1, h_2 \geq 3$ and $h_1 < m < n-(\lambda_1-1)$. We will proceed using induction, so assume that the theorem holds for partitions of size $< n$ and Durfee size $2$. There are four cases to consider, but we only need to check three. We have $h_1 > h_2$ and $\lambda_1 > \lambda_2$, $h_1 = h_2$ and $\lambda_1 > \lambda_2$, $h_1 > h_2$ and $\lambda_1=\lambda_2$, and finally $h_1 = h_2$ and $\lambda_1=\lambda_2$. We only need to check one of the second and third, since if the lemma holds for one, it holds for the other by Theorem \ref{symmetry} and Lemma \ref{symmetry2}.

Let $h_1 > h_2$ and $\lambda_1 > \lambda_2$. We have the four following possible ways to have $m$ in the first two columns.

\begin{figure}[h]
	\begin{displaymath}
	\tableau{
	m \\
	\ & \ \\
	\ & \ \\
	\ & \ & \mathclap{\whitebox}\mathclap{\raisebox{4\unitlength}{\ $\cdots$}}\\
	\ & \ & \mathclap{\whitebox}\mathclap{\raisebox{4\unitlength}{\ $\cdots$}}\\}
	\ \ \ \ \ \ 
	\tableau{
	m  \\
	\ & m \\
	\ & \ \\
	\ & \ & \mathclap{\whitebox}\mathclap{\raisebox{4\unitlength}{\ $\cdots$}}\\
	\ & \ & \mathclap{\whitebox}\mathclap{\raisebox{4\unitlength}{\ $\cdots$}}\\}
	\ \ \ \ \ \ 
	\tableau{
	\ \\
	\ & m \\
	\ & \ \\
	\ & \ & \mathclap{\whitebox}\mathclap{\raisebox{4\unitlength}{\ $\cdots$}}\\
	\ & \ & \mathclap{\whitebox}\mathclap{\raisebox{4\unitlength}{\ $\cdots$}}\\}
	\ \ \ \ \ \ 
	\tableau{
	\ \\
	\ & \ \\
	\ & \ \\
	\ & \ & \mathclap{\whitebox}\mathclap{\raisebox{4\unitlength}{\ $\cdots$}}\\
	\ & \ & \mathclap{\whitebox}\mathclap{\raisebox{4\unitlength}{\ $\cdots$}}\\}
	\end{displaymath}
	\caption{Possible arrangements of $m$.}
\end{figure}

Note that the last case is possible now that we can place an $m$ at the end of the second row, and in fact, this is forced by the quasi-Yamanouchi condition when $m$ is not in the first two columns. By summing over the ways that $m$ can appear in the first and second rows, we get the following recurrence. The function $A_{=m}(\lambda)$ counts the subset of quasi-Yamanouchi fillings of $\lambda$ where some $m$ appears anywhere but the first column. $B_{=m}(\lambda)$ counts the subset of quasi-Yamanouchi fillings where some $m$ appears anywhere but the first and second columns.

\noindent\resizebox{1\linewidth}{!}{
  \begin{minipage}{\linewidth}
\begin{equation*}
\begin{aligned}
\textrm{QYT}_{=m}(\lambda)= &\sum_{i=0}^{\lambda_2-2} \sum_{j=0}^{\lambda_1-\lambda_2} \textrm{QYT}_{=m-1}(\lambda_1-j, \lambda_2-i, 2^{h_2-2}, 1^{(h_1-1)-h_2})\\&+ \sum_{i=0}^{\lambda_2-2} \sum_{j=0}^{\lambda_1-\lambda_2} \textrm{QYT}_{=m-1}(\lambda_1-j, \lambda_2-i, 2^{(h_2-1)-2}, 1^{(h_1-1)-(h_2-1)}) \\&+\sum_{i=0}^{\lambda_2-2} \sum_{j=0}^{\lambda_1-\lambda_2} A_{=m-1}(\lambda_1-j, \lambda_2-i, 2^{(h_2-1)-2}, 1^{h_1-(h_2-1)})
\\&+\sum_{i=i}^{\lambda_2-2} \sum_{j=0}^{\lambda_1-\lambda_2} B_{=m-1}(\lambda_1-j, \lambda_2-i, 
2^{h_2-2}, 1^{h_1-h_2})
\end{aligned}
\end{equation*}
\end{minipage}
}

By definition of the $A_{=m}$ function,

\noindent\resizebox{.82\linewidth}{!}{
  \begin{minipage}{\linewidth}
\begin{equation*}
\begin{aligned}
A_{=m-1}(\lambda_1-j, \lambda_2-i,2^{(h_2-1)-2}, 1^{h_1-(h_2-1)}) = &\textrm{QYT}_{=m-1}(\lambda_1-j, \lambda_2-i, 2^{(h_2-1)-2}, 1^{h_1-(h_2-1)})\\& - \textrm{QYT}_{=m-2}(\lambda_1-j, \lambda_2-i, 2^{(h_2-1)-2}, 1^{(h_1-1)-(h_2-1)})
\end{aligned}
\end{equation*}
\end{minipage}
}

Comparing QYT$_{=m-1}(\lambda_1-j, \lambda_2-i, 2^{h_2-2}, 1^{h_1-h_2})$ with $B_{=m-1}(\lambda_1-j, \lambda_2-i,2^{h_2-2}, 1^{h_1-h_2})$ gives

\noindent\resizebox{.80\linewidth}{!}{
  \begin{minipage}{\linewidth}
\begin{equation*}
\begin{aligned}
\sum_{i=1}^{\lambda_2-2} \sum_{j=0}^{\lambda_1-\lambda_2} B_{=m-1}(\lambda_1-j, &\lambda_2-i, 2^{h_2-2}, 1^{h_1-h_2}) 
=\sum_{i=1}^{\lambda_2-2} \sum_{j=0}^{\lambda_1-\lambda_2} \textrm{QYT}_{=m-1}(\lambda_1-j,\lambda_2-i,2^{h_2-2}, 1^{h_1-h_2})
\\&-\sum_{i=1}^{\lambda_2-2} \sum_{j=0}^{\lambda_1-\lambda_2} \sum_{\ell = 0}^{\lambda_2-i-2} \textrm{QYT}_{=m-2}(\lambda_1-j, \lambda_2-i-\ell, 2^{h_2-2}, 1^{(h_1-1)-h_2})
\\&- \sum_{i=1}^{\lambda_2-2} \sum_{j=0}^{\lambda_1-\lambda_2} \sum_{\ell = 0}^{\lambda_2-i-2} \textrm{QYT}_{=m-2}(\lambda_1-j, \lambda_2-i-\ell, 2^{(h_2-1)-2}, 1^{(h_1-1)-(h_2-1)})
\\&- \sum_{i=1}^{\lambda_2-2} \sum_{j=0}^{\lambda_1-\lambda_2} \sum_{\ell = 0}^{\lambda_2-i-2} A_{=m-2}(\lambda_1-j, \lambda_2-i-\ell, 2^{(h_2-1)-2}, 1^{h_1-(h_2-1)})
\\&- \sum_{i=1}^{\lambda_2-2} \sum_{j=0}^{\lambda_1-\lambda_2} \sum_{\ell = 1}^{\lambda_2-i-2} B_{=m-2}(\lambda_1-j, \lambda_2-i-\ell, 2^{h_2-2}, 1^{h_1-h_2})
\\=\sum_{i=1}^{\lambda_2-2} \sum_{j=1}^{\lambda_1-\lambda_2} \textrm{QYT}_{=m-1}&(\lambda_1-j,\lambda_2-i,2^{h_2-2}, 1^{h_1-h_2})+\sum_{i=1}^{\lambda_2-2} \textrm{QYT}_{=m-1}(\lambda_2-1,\lambda_2-i,2^{h_2-2}, 1^{h_1-h_2})
\end{aligned}
\end{equation*}
\end{minipage}
}

The last step follows from comparison of the four triple sums with the expansion of 
$$\sum_{i=1}^{\lambda_2-2} [\textrm{QYT}_{=m-1}(\lambda_1,\lambda_2-i,2^{h_2-2}, 1^{h_1-h_2})-\textrm{QYT}_{=m-1}(\lambda_2-1,\lambda_2-i,2^{h_2-2}, 1^{h_1-h_2})]$$
followed by cancelling like terms. Substituting then gives

\resizebox{.85\linewidth}{!}{
  \begin{minipage}{\linewidth}
\begin{equation*}
\begin{aligned}
\textrm{QYT}_{=m}(\lambda)=& \sum_{i=0}^{\lambda_2-2} \sum_{j=0}^{\lambda_1-\lambda_2} \textrm{QYT}_{=m-1}(\lambda_1-j, \lambda_2-i, 2^{h_2-2}, 1^{(h_1-1)-h_2})
\\&+ \sum_{i=0}^{\lambda_2-2} \sum_{j=0}^{\lambda_1-\lambda_2} \textrm{QYT}_{=m-1}(\lambda_1-j, \lambda_2-i, 2^{(h_2-1)-2}, 1^{(h_1-1)-(h_2-1)})
\\&+\sum_{i=0}^{\lambda_2-2} \sum_{j=0}^{\lambda_1-\lambda_2}\textrm{QYT}_{=m-1}(\lambda_1-j, \lambda_2-i, 2^{(h_2-1)-2}, 1^{h_1-(h_2-1)})
\\&-\sum_{i=0}^{\lambda_2-2} \sum_{j=0}^{\lambda_1-\lambda_2}\textrm{QYT}_{=m-2}(\lambda_1-j, \lambda_2-i, 2^{(h_2-1)-2}, 1^{(h_1-1)-(h_2-1)})
\\&+\sum_{i=1}^{\lambda_2-2} \sum_{j=1}^{\lambda_1-\lambda_2} \textrm{QYT}_{=m-1}(\lambda_1-j,\lambda_2-i,2^{h_2-2}, 1^{h_1-h_2})
\\&+\sum_{i=1}^{\lambda_2-2} \textrm{QYT}_{=m-1}(\lambda_2-1,\lambda_2-i,2^{h_2-2}, 1^{h_1-h_2})
\end{aligned}
\end{equation*}
\end{minipage}
}

Finally, a comparison of $\textrm{QYT}_{=m}(\lambda_1,\lambda_2,2^{h_2-2}, 1^{h_1-h_2})-\textrm{QYT}_{=m}(\lambda_1-1,\lambda_2,2^{h_2-2}, 1^{h_1-h_2})$ and $\textrm{QYT}_{=m}(\lambda_1,\lambda_2-1,2^{h_2-2}, 1^{h_1-h_2})-\textrm{QYT}_{=m}(\lambda_1-1,\lambda_2-1,2^{h_2-2}, 1^{h_1-h_2})$ gives

\noindent\resizebox{.85\linewidth}{!}{
  \begin{minipage}{\linewidth}
\begin{equation*}
\begin{aligned}
\textrm{QYT}&_{=m}(\lambda_1,\lambda_2,2^{h_2-2}, 1^{h_1-h_2})\\& = \textrm{QYT}_{=m}(\lambda_1-1,\lambda_2,2^{h_2-2}, 1^{h_1-h_2}) + \textrm{QYT}_{=m}(\lambda_1,\lambda_2-1,2^{h_2-2}, 1^{h_1-h_2})
\\&- \textrm{QYT}_{=m}(\lambda_1-1,\lambda_2-1,2^{h_2-2}, 1^{h_1-h_2}) + \textrm{QYT}_{=m-1}(\lambda_1,\lambda_2,2^{h_2-2}, 1^{(h_1-1)-h_2})
\\&+\textrm{QYT}_{=m-1}(\lambda_1,\lambda_2,2^{(h_2-1)-2}, 1^{(h_1-1)-(h_2-1)}) + \textrm{QYT}_{=m-1}(\lambda_1,\lambda_2,2^{(h_2-1)-2}, 1^{h_1-(h_2-1)})
\\&-\textrm{QYT}_{=m-2}(\lambda_1,\lambda_2,2^{(h_2-1)-2}, 1^{(h_1-1)-(h_2-1)}) + \textrm{QYT}_{=m-1}(\lambda_1-1,\lambda_2-1,2^{h_2-2}, 1^{h_1-h_2})
\end{aligned}
\end{equation*}
\end{minipage}
}

Replace each term with the right-hand expression in the lemma statement. Expanding this into factorials, multiplying out denominators, and cancelling terms gives equality.

When $h_1>h_2$ and $\lambda_1 = \lambda_2$, we consider the same diagrams as in the first case. We repeat the same argument step by step, while taking care to note which terms will be zero. For example, there are no quasi-Yamanouchi tableau corresponding to the shape $(\lambda_1-1, \lambda_1,2^{h_2-2}, 1^{h_1-h_2})$, so $\textrm{QYT}_{=m}(\lambda_1-1, \lambda_1, 2^{h_2-2}, 1^{h_1-h_2}) = 0$. There are also now empty sums in some of the recurrence expansions to watch out for. In the end, we will get a similar equation as before.

\noindent\resizebox{.85\linewidth}{!}{
  \begin{minipage}{\linewidth}
\begin{equation*}
\begin{aligned}
\textrm{QYT}&_{=m}(\lambda_1,\lambda_1,2^{h_2-2}, 1^{h_1-h_2})
\\& = \textrm{QYT}_{=m}(\lambda_1,\lambda_1-1,2^{h_2-2}, 1^{h_1-h_2}) - \textrm{QYT}_{=m}(\lambda_1-1,\lambda_1-1,2^{h_2-2}, 1^{h_1-h_2}) 
\\&+ \textrm{QYT}_{=m-1}(\lambda_1,\lambda_1,2^{h_2-2}, 1^{(h_1-1)-h_2})+\textrm{QYT}_{=m-1}(\lambda_1,\lambda_1,2^{(h_2-1)-2}, 1^{(h_1-1)-(h_2-1)})
\\& + \textrm{QYT}_{=m-1}(\lambda_1,\lambda_1,2^{(h_2-1)-2}, 1^{h_1-(h_2-1)})-\textrm{QYT}_{=m-2}(\lambda_1,\lambda_1,2^{(h_2-1)-2}, 1^{(h_1-1)-(h_2-1)})
\\& + \textrm{QYT}_{=m-1}(\lambda_1-1,\lambda_1-1,2^{h_2-2}, 1^{h_1-h_2})
\end{aligned}
\end{equation*}
\end{minipage}
}

Once again, write each term out as a product of factorials and multiply out denominators. Expanding products and then canceling shows that these are equal.

Finally, let $h_1 = h_2$ and $\lambda_1 = \lambda_2$. Now there are only three diagrams that will contribute non-zero terms. 

\begin{figure}[h]
	\begin{displaymath}
	\tableau{
	m&m\\
	\ & \ \\
	\ & \ & \mathclap{\whitebox}\mathclap{\raisebox{4\unitlength}{\ $\cdots$}}\\
	\ & \ & \mathclap{\whitebox}\mathclap{\raisebox{4\unitlength}{\ $\cdots$}}\\}
	\ \ \ \ \ \ \ \ \ \ \tableau{
	\ &m\\
	\ & \ \\
	\ & \ & \mathclap{\whitebox}\mathclap{\raisebox{4\unitlength}{\ $\cdots$}}\\
	\ & \ & \mathclap{\whitebox}\mathclap{\raisebox{4\unitlength}{\ $\cdots$}}\\}
	\ \ \ \ \ \ \ \ \ \ \tableau{
	\ &\ \\
	\ & \ \\
	\ & \ & \mathclap{\whitebox}\mathclap{\raisebox{4\unitlength}{\ $\cdots$}}\\
	\ & \ & \mathclap{\whitebox}\mathclap{\raisebox{4\unitlength}{\ $\cdots$}}\\}	
	\end{displaymath}
	\caption{Possible arrangements of $m$.}
\end{figure}

We use a similar argument again, where now more terms vanish. After the dust settles, we find that

\noindent\resizebox{.85\linewidth}{!}{
  \begin{minipage}{\linewidth}
\begin{equation*}
\begin{aligned}
\textrm{QYT}&_{=m}(\lambda_1,\lambda_1,2^{h_1-2}, 1^{h_1-h_1}) \\&= \textrm{QYT}_{=m}(\lambda_1,\lambda_1-1,2^{h_1-2}, 1^{h_1-h_1})-\textrm{QYT}_{=m}(\lambda_1-1,\lambda_1-1,2^{h_1-2}, 1^{h_1-h_1})
\\&+\textrm{QYT}_{=m-1}(\lambda_1,\lambda_1,2^{(h_1-1)-2}, 1^{(h_1-1)-(h_1-1)}) + \textrm{QYT}_{=m-1}(\lambda_1,\lambda_1,2^{(h_1-1)-2}, 1^{h_1-(h_1-1)})
\\&-\textrm{QYT}_{=m-2}(\lambda_1,\lambda_1,2^{(h_1-1)-2}, 1^{(h_1-1)-(h_1-1)}) +\textrm{QYT}_{=m-1}(\lambda_1-1,\lambda_1-1,2^{h_1-2}, 1^{h_1-h_1})
\end{aligned}
\end{equation*}
\end{minipage}
}

Expand and cancel one last time and we are done. \hfill \qed

\section{Larger shapes and future work}

The work in this paper is for a purely algebraic proof, but we can also write $\textrm{QYT}_{=m}(\lambda)$ in a manner that suggests possible bijective algorithms. For example, with a little manipulation we have

\noindent\resizebox{.95\linewidth}{!}{
  \begin{minipage}{\linewidth}
\begin{equation*}
\begin{aligned}
{m-h_2+1\choose h_1-h_2+1}\textrm{QYT}_{=m}&(\lambda_1,\lambda_2,2^{h_2-2}, 1^{h_1-h_2}) 
\\&=
{\lambda_1+h_2-2\choose m-h_1}{\lambda_2+h_2-3\choose m-h_1}\text{\normalfont SSYT}_{h_1}(\lambda_1,\lambda_2,2^{h_2-2}, 1^{h_1-h_2})
\end{aligned}
\end{equation*}
\end{minipage}
}

For those who are a little more brave, we also have 

\noindent\resizebox{.94\linewidth}{!}{
  \begin{minipage}{\linewidth}
\begin{equation*}
\begin{aligned}
{m+\lambda_1-1\choose m-h_1}{m+\lambda_2-2\choose m-h_1}&\textrm{QYT}_{=m}(\lambda_1,\lambda_2,2^{h_2-2}, 1^{h_1-h_2}) =
\\&{\lambda_1+h_2-2\choose m-h_1}{\lambda_2+h_2-3\choose m-h_1}\text{\normalfont SSYT}_{m}(\lambda_1,\lambda_2,2^{h_2-2}, 1^{h_1-h_2})
\end{aligned}
\end{equation*}
\end{minipage}
}

Of course, the general case is also open. We provide a few examples thanks to computer calculations.
\begin{figure}[H]
	\begin{displaymath}
	\tableau{
	\ & \ & \ \\
	\ & \ & \ \\
	\ & \ & \ \\
	\ & \ & \ \\}
	\ \ \ \ \ \ 
	\tableau{
	\ & \ \\
	\ & \ & \ \\
	\ & \ & \ \\
	\ & \ & \ \\}
	\end{displaymath}
	\caption{On the left, $\textrm{QYT}_{=6}(\lambda) = 113$ and on the right, QYT$_{\leq6}(\lambda) = 241$.}
\end{figure}
In these fairly small examples, we are faced with relatively large primes. The first example serves to demonstrate that a product formula is unlikely to exist when attempting to enumerate quasi-Yamanouchi fillings where $m$ is the largest value that appears, while the second demonstrates the same for attempts to enumerate quasi-Yamanouchi fillings that are bounded above by $m$. Thus, the general case will most likely require some form of summation. We end with the following table that counts quasi-Yamanouchi fillings of different partitions of $6$, which visually demonstrates their symmetry.
\begin{figure}[H]
\begin{tabular}{r|>{\centering}p{8\unitlength}|>{\centering}p{8\unitlength}|>{\centering}p{8\unitlength}|>{\centering}p{8\unitlength}|>{\centering}p{8\unitlength}|c|}
\multicolumn{1}{c}{}&\multicolumn{1}{c}{}&\multicolumn{1}{c}{}&\multicolumn{2}{c}{\underline{m}}\\
\multicolumn{1}{c}{}& \multicolumn{1}{c}{1}&\multicolumn{1}{c}{2}&\multicolumn{1}{c}{3}&\multicolumn{1}{c}{4}&\multicolumn{1}{c}{5}&\multicolumn{1}{c}{6}\\ \cline{2-7}
$(6)$ &$1$ & & & & & \\ \cline{2-7}
$(5, 1)$ & &$5$ & & & & \\ \cline{2-7}
$(4, 2)$ & &$3$ &$6$ & & & \\ \cline{2-7}
$(4, 1, 1)$ & & & $10$& & & \\ \cline{2-7}
$(3, 3)$ & & $1$& $3$& $1$& & \\ \cline{2-7}
\rotatebox{90}{\rlap{\underline{$\lambda \vdash 6$}}}\ \ \ \ \  $(3, 2, 1)$ & & & $8$&$8$ & & \\ \cline{2-7}
$(2, 2, 2)$ & & & $1$&$3$ &$1$ & \\ \cline{2-7}
$(3, 1, 1, 1)$ & & & & $10$& & \\ \cline{2-7}
$(2, 2, 1, 1)$ & & & & $6$&$3$ & \\ \cline{2-7}
$(2, 1, 1, 1, 1)$ & & & & & $5$& \\ \cline{2-7}
$(1, 1, 1, 1, 1, 1)$ & & & & & & $\hspace{1.5\unitlength}1\hspace{1.5\unitlength}$\\ \cline{2-7}
\end{tabular}
\vspace{2.5mm}
\caption{A table that gives the values of $\textrm{QYT}_{=m}(\lambda)$ for each partition of $6$ and each value of $1\leq m \leq 6$. Blank entries indicate a $0$ value.}
\end{figure}

\section*{Acknowledgements}

I am grateful to Sami Assaf for her valuable guidance and encouragement. I would also like to thank Dominic Searles for his helpful advice. Computations were done in C++.

\providecommand{\bysame}{\leavevmode\hbox to3em{\hrulefill}\thinspace}
\providecommand{\MR}{\relax\ifhmode\unskip\space\fi MR }
\providecommand{\MRhref}[2]{%
  \href{http://www.ams.org/mathscinet-getitem?mr=#1}{#2}
}
\providecommand{\href}[2]{#2}

\end{document}